\algnewcommand\And{\textbf{and}}
\algnewcommand\Or{\textbf{or}}
\algnewcommand\Not{\textbf{not}}
\algnewcommand\In{\textbf{in}}
\algnewcommand\Each{\textbf{each}}
\newtheorem{theorem}{Theorem}[section]          
\newtheorem{lemma}[theorem]{Lemma}             
\newtheorem{corollary}[theorem]{Corollary}         
\newenvironment{proof}{{\em Proof.}}{\hspace*{\fill}$\Box$\par\vspace{3mm}}
\newcommand{\squishlist}{
 \begin{list}{$\bullet$}
  { \setlength{\itemsep}{0pt}
     \setlength{\parsep}{3pt}
     \setlength{\topsep}{3pt}
     \setlength{\partopsep}{0pt}
     \setlength{\leftmargin}{2.5em}
     \setlength{\labelwidth}{1em}
     \setlength{\labelsep}{0.5em} } }
\newcommand{\squishlisttwo}{
 \begin{list}{$\triangleright$}
  { \setlength{\itemsep}{0pt}
     \setlength{\parsep}{0pt}
    \setlength{\topsep}{0pt}
    \setlength{\partopsep}{0pt}
    \setlength{\leftmargin}{2em}
    \setlength{\labelwidth}{1.5em}
    \setlength{\labelsep}{0.5em} } }
\newcommand{\squishend}{
  \end{list}  }
\definecolor{verbgray}{gray}{0.9}
\newcommand{\bS}{\mathbf{S}}
\definecolor{shadecolor}{rgb}{.91, .91, .91}
\definecolor{bordercolor}{rgb}{.8, .8, .6}
\definecolor{ultramarine}{rgb}{0, 0.125, 0.376}
 \definecolor{arsenic}{rgb}{0.23, 0.27, 0.29}
 \definecolor{beige}{rgb}{0.96, 0.96, 0.86}
\definecolor{amber}{rgb}{1.0, 0.75, 0.0}
\definecolor{orange}{rgb}{1.0, 0.49, 0.0}
\definecolor{dandelion}{rgb}{0.94, 0.88, 0.19}
  \definecolor{indiagreen}{rgb}{0.07, 0.53, 0.03}
  \definecolor{huntergreen}{rgb}{0.21, 0.37, 0.23}
\definecolor{shadecolor}{rgb}{.9, .9, .9}
 \colorlet{framecolor}{ultramarine}
  \colorlet{exframecolor}{orange}
    \newenvironment{frshaded*}{%
    \MakeFramed {\advance\hsize-\width \FrameRestore}}%
    {\endMakeFramed}
    \newcounter{examplecounter}
\newenvironment{exam}{
 \begin{frshaded*}
    \refstepcounter{examplecounter}%
    \noindent
  \textbf{Example \arabic{examplecounter}}%
  \quad
}{%
\end{frshaded*}
}
\newenvironment{frshaded2*}{%
    \MakeFramed {\advance\hsize-\width \FrameRestore}}%
    {\endMakeFramed}
\newenvironment{boxed2}{
 \begin{frshaded2*}
}{%
\end{frshaded2*}
}
\newcommand*\wc{{\mkern 2mu\cdot\mkern 2mu}}
\newcommand{\U}{\mathcal{U}}
\newcommand{\factor}{\mathbf{sub}}
\begin{document}

\title{ \Large Constructing de Bruijn sequences by concatenating smaller universal cycles}

\author{Daniel Gabric%
  \thanks{E-mail: \texttt{dgabric@uoguelph.ca}} \hspace{0.05cm} and Joe Sawada%
  \thanks{E-mail: \texttt{jsawada@uoguelph.ca}}}
\affil{University of Guelph, Canada}

\maketitle
 
\begin{abstract}
We present sufficient conditions for when an ordering of universal cycles $\alpha_1, \alpha_2, \ldots, \alpha_m$ for disjoint sets $\bS_1, \bS_2, \ldots , \bS_m$ can be concatenated together
to obtain a universal cycle for $\bS =  \bS_1 \cup \bS_2 \cup \cdots \cup  \bS_m$.  When $\bS$ is the set of all $k$-ary strings of length $n$, the result of such a successful construction is a de Bruijn sequence.
Our conditions are applied to generalize two previously known de Bruijn sequence constructions and then they are applied to develop three new de Bruijn sequence constructions.  
\end{abstract}
\section{Introduction}  \label{sec:intro}

Let $\Sigma_k= \{0,1,\ldots , k-1\}$ be an alphabet of $k\geq 2$ symbol and let $\Sigma_k^n$ be the set of $k$-ary strings of length $n$. Given a non-empty subset $\mathbf{S}$ of $\Sigma_k^n$, a \emph{universal cycle} for $\mathbf{S}$ is a sequence of length $|\mathbf{S}|$  that contains every string in $\mathbf{S}$ as a substring exactly once when the sequence is viewed circularly. A universal cycle is said to be a \emph{de Bruijn sequence} in the case that $\mathbf{S}=\Sigma_k^n$. For example, \[ 000111222121101201002102202\] is a de Bruijn sequence for $\Sigma_3^3$. 
It is well known that de Bruijn sequences are in one-to-one correspondence with directed Euler cycles in a related de Bruijn graph.  However, algorithms to find Euler cycles in graphs require that the graph be stored in memory, and the de Bruijn graph is exponential in size.    Amazingly, a prefer-smallest greedy approach~\cite{martin,ford} generates the lexicographically smallest de Bruijn sequence~\cite{fred-lexleast};  however, like other preference-based methods~\cite{alhakim-span}, including prefer-same~\cite{eldert,fred-nfsr} and prefer-opposite~\cite{pref-opposite} in the binary case,  it also requires an exponential amount of memory.    As a result, there has been significant research to efficiently construct de Bruijn sequences for arbitrary $n$ and $k$.  The majority of this work constructs de Bruijn sequences via a successor-rule, finding one symbol at  a time using the previous $n$ symbols.   Of this work, most apply only to the case when $k=2$ \cite{jansen, Etzion1984, Etzion1987,fred-lexleast,grandma,wong,huang}, although several approaches generalize to larger alphabets~\cite{fred-lexleast,grandma2, ETZION1986331,kwong}.  In the best case, these algorithms require $O(n)$-time per symbol and use $O(n)$-space.    

The most efficient constructions of de Bruijn sequences arise from a concatenation approach, with some generating each symbol in  $O(1)$-amortized time using $O(n)$ space.  However, very little is known about these constructions in general.  The first such construction was given by Fredericksen and Maiorana~\cite{fkm2}.  To describe their approach we need the following two definitions.  A \emph{necklace} is the lexicographically smallest string in an equivalence class under rotation.  The \emph{periodic reduction} of a string $\alpha = a_1a_2\cdots a_n$ is $a_1a_2\cdots a_p$ where $p$ is the smallest integer such that $\alpha =  (a_1a_2\cdots a_p)^{n/p}$, where exponentiation denotes concatenation.  They show that the lexicographically smallest de Bruijn sequence\footnote{They actually show (equivalently) their approach produces the lexicographically largest de Bruijn sequence.} can be constructed by concatenating together the periodic reductions of all $k$-ary necklaces of length $n$ listed in lexicographic order.  An analysis in~\cite{RSW} shows that this construction generates each symbol in $O(1)$-amortized time using $O(n)$ space.  

\begin{exam}  \small
The set of necklaces of length $n=4$ when $k=2$ is $\{0000, 0001, 0011, 0101, 0111, 1111\}$.
By concatenating the periodic reduction of each necklace in lexicographic order we obtain the following de Bruijn sequence for $\Sigma_2^4$
\[ 0 \cdot 0001 \cdot  0011 \cdot 01 \cdot 0111 \cdot 1 \]
where $\cdot$ is used to denote concatenation for clarity.
\end{exam}
\noindent
There is a very subtle point in the description of this algorithm.  Are the periodic reductions listed in lexicographic order, or are the necklaces first listed in lexicographic order and \emph{then} the periodic reductions are applied? It turns out, that when using lexicographic order, it does not matter; the two listings are equivalent.  This is pointed out by Ruskey~\cite{ruskey} who also describes the algorithm as \emph{the concatenation of all Lyndon words whose length divide $n$ in lexicographic order}, where  \emph{Lyndon words} are necklaces equal to their periodic reductions.   Interestingly, these approaches are no longer equivalent when we consider a co-lexicographic (colex) order as pointed out in~\cite{grandma}.  If we order the periodic reductions in colex order for $n=4$ and $k=2$ we obtain 
\[ 0  \cdot 1 \cdot 01 \cdot 0001 \cdot 0011 \cdot 0111 = 0101000100110111 \]
which is not a de Bruijn sequence (it has no substring 1111).  However, by first ordering the necklaces in colex order and then taking their periodic reductions, it is proved by Dragan et al.~\cite{grandma2} that for any $k\geq 2$ and $n \geq 1$ the result is a de Bruijn sequence for $\Sigma_k^n$.  When $k=2$ and $n=4$ this construction produces the following de Bruijn sequence
\[ 0  \cdot 0001 \cdot 01  \cdot 0011 \cdot 0111 \cdot 1.\]
When $k=2$, by applying an algorithm in~\cite{SAWADA201725} each symbol can be generated in $O(1)$-amortized time using $O(n)$ space.  When $k > 2$, each symbol can be produced in $O(n)$ time~\cite{grandma2}, and it remains an open problem to improve this bound.  For each of these two concatenation approaches, observe that the periodic reductions correspond to universal cycles for each necklace equivalence class.

The main result of this paper is to provide more general conditions for when universal cycles $\alpha_1, \alpha_2, \ldots , \alpha_m$ for disjoint subsets $\bS_1, \bS_2, \ldots ,\bS_m$ of $\Sigma_k^n$ can be concatenated together to obtain a universal cycle for $\bS = \bS_1 \cup \bS_2\cup  \cdots \cup  \bS_m$.  Our results can be applied to:
\begin{itemize}
\item generalize the lexicographic concatenation scheme by Fredricksen and Maiorana~\cite{fkm2}, 
\item generalize the colex concatenation scheme by Dragan et al.~\cite{grandma2},
\item obtain a new de Bruijn sequence construction that in the binary case is observed to be equivalent to the successor-rule based algorithm in~\cite{kwong}, and
\item obtain two new binary de Bruijn sequence constructions based on co-necklaces (defined in Section~\ref{subsection:two}).
\end{itemize}
These results generalize preliminary work presented at WORDS 2017~\cite{Gabric2017}.

In addition to the two concatenation schemes presented earlier, two others are known to construct universal cycles for subsets of binary strings.  One  generalizes Fredricksen and Maiorana's approach for binary strings with a minimum specified weight (number of 1s)~\cite{SAWADA201431}, and another is based on cool-lex order that applies to binary strings in a given weight range~\cite{coollex,dbrange,walcom}.  Each algorithm constructs universal cycles in $O(1)$-amortize time per bit using $O(n)$-space.  The sufficient conditions presented in this paper do not apply to these algorithms.

The remainder of this paper is presented as follows.
In Section~\ref{sec:back}, we present background definitions and notation. In Section~\ref{sec:main}, we present our main results which provide sufficient
conditions for when smaller universal cycles can be concatenated together to create a new larger universal cycle. In Section~\ref{sec:deb}, we apply our conditions to generalize previously known de Bruijn sequence constructions and to develop three new and generalized de Bruijn sequence constructions.

\section{Background Definitions and Notation} \label{sec:back}

Let $\alpha=a_1a_2\cdots a_m$ and $\beta= b_1b_2\cdots b_n$ be two distinct $k$-ary strings. Then $\alpha$ comes before $\beta$ in \emph{lexicographic} (lex) order if $\alpha$ is a proper prefix of $\beta$  or if $a_i < b_i$ for the smallest $i$ where $a_i\neq b_i$. We say that  $\alpha$ comes before $\beta$ in \emph{colexicographic} (colex) order if $\alpha$ is a proper suffix of $\beta$  or if $a_i < b_i$ for the largest $i$ where $a_i\neq b_i$.
Given a set $\bS$ of strings of arbitrary length, let
\begin{itemize}
\item  $\text{lex}(\bS)$ denote the strings of $\bS$ listed in lex order,
 \item  $\text{revlex}(\bS)$ denote the strings of $\bS$ listed in reverse lex order,
 \item  $\text{colex}(\bS)$ denote the strings of $\bS$ listed in colex order, and
 \item  $\text{revcolex}(\bS)$ denote the strings of $\bS$ listed in reverse colex order.  
\end{itemize}

\begin{exam}  \small
\label{example:three}
\noindent Let $\bS = \{ 0101, 21201, 12020,000,220, 02102 \}$.   Then 
\begin{equation}
\begin{split}
\text{lex}(\bS)&=000,0101,02102,12020,21201,220,\nonumber \\
\text{revlex}(\bS)&=220,21201,12020,02102,0101,000,\nonumber\\
\end{split}
\quad
\begin{split}
\text{colex}(\bS)&=000,12020,220,0101,21201,02102,\nonumber \\
\text{revcolex}(\bS)&=02102,21201,0101,220,12020,000.\nonumber
\end{split}
\end{equation}
\vspace{-0.1in}
\end{exam}

\noindent

Let $\alpha=a_1a_2\cdots a_s$ and $\beta=b_1b_2\cdots b_t$ be two strings with $s,t\geq n>0$.  
Let $\text{suff}_n(\alpha)$ be the length $n$ suffix of $\alpha$ and $\text{pre}_n(\alpha)$ be the length $n$ prefix of $\alpha$. For example, $\text{suff}_3(0032233)=233$ and $\text{pre}_3(0032233)=003$. The set of necklaces when $n=5$ and $k=2$ is $$\mathbf{S}=\{00000,00001,00011,00101,00111,01011,01111,11111\}.$$ When you order $\mathbf{S}$ in colex order, an interesting property between adjacent necklaces becomes apparent, \[\text{colex}(\mathbf{S}) =00000,00001,00101,00011,01011,00111,01111,11111.\] Observe that for any two adjacent necklaces $\sigma$ and $\tau$ in the above listing, if $j$ is the smallest index where $\tau$ is not $0$ at index $j$, then $\sigma$ and $\tau$ share a length $n-j$ suffix. For example, consider the adjacent necklaces $00111$ and $01111$. At index $2$, $01111$ is not $0$, and in fact this is the smallest index for which this is true. With this index, we predict that the longest matching suffix is of length $5-2=3$, which we can easily verify to be true. The following two properties generalize this idea, and extend it to prefixes.

{\bf Suffix-related:} Let $x\in \Sigma_k$, and let $j$ be the smallest index of $\beta$ such that $x \neq b_j$, or $\infty$ if no such $j$ exists. Then the ordered pair of strings $(\alpha,\beta)$ is said to be \emph{suffix-related} with respect to $(x,n)$ if $j\leq n$ and $\text{suff}_{n-j}(\alpha)=\text{suff}_{n-j}(\beta)$.

{\bf Prefix-related:} Let $x\in \Sigma_k$, and let $j$ be the smallest index of $\alpha$ such that $x\neq a_{s-j}$, or $\infty$ if no such $j$ exists. Then the ordered pair of strings $(\alpha, \beta)$ is said to be \emph{prefix-related} with respect to $(x,n)$ if $j\leq n$ and $\text{pre}_{n-j-1}(\alpha) = \text{pre}_{n-j-1}(\beta)$. 

\begin{exam}
\label{example:two}  \small
Let  $\alpha=00001200$, $\beta =02000200$, $x=0$ and $n=5$,

\smallskip

\noindent
The smallest index $j$ of $\beta$ such that $b_j \neq x$ is $j=2$.  Note, $\text{suff}_{5-2}(00001200)=\text{suff}_{5-2}(02000200)=200$. Thus, $(\alpha,\beta)$ are suffix-related with respect to $(0,5)$.

\smallskip

\noindent
The smallest $j>0$ such that $a_{s-j} \neq x$ is $j=2$.  Note, $00=\text{pre}_{5-2-1}(00001200)\neq\text{pre}_{5-2-1}(02000200)=02$. Thus, $(\alpha,\beta)$ are not prefix-related with respect to $(0,5)$.
\end{exam}

Let $\text{ext}_n(\alpha)=\alpha^t$, where $t$ is the smallest integer so  $t|\alpha|\geq n$.  
Let $\factor_n(\alpha)$ be the set of all length $n$ substrings in the cyclic string $\alpha$.   For example, $\factor_3(01201) = \{012, 120, 201, 010, 101\}$ and $\factor_5(02) = \{02020,20202\}.$

\section{Concatenating Universal Cycles}  \label{sec:main}

\begin{sloppypar}
Let $\bS$ be a non-empty subset of  $\Sigma_k^n$.  A partition of $\bS$ into subsets $\bS_1,\bS_2,\ldots, \bS_m$ is called a \emph{UC-partition} if there is a universal cycle $\alpha_i$ for each $\bS_i$, $1\leq i \leq m$.
\end{sloppypar}

\begin{exam}  \small
The following sets
\label{example:one}
  \begin{align}
       \bS_1 &= \{00000,00001,00011,00111,01111,11111,11110,11100,11000,10000\},  \nonumber \\
       \bS_2 &= \{00100,01001,10011,00110,01101,11011,10110,01100,11001,10010\},  \nonumber \\
       \bS_3 &=  \{00010,00101,01011,10111,01110,11101,11010,10100,01000,10001\},\nonumber \\
       \bS_4 &= \{01010,10101\},\nonumber        
  \end{align}
 together form a UC-partition of $\Sigma_2^5$  with universal cycles
 \[\alpha_1=0000011111, \ \alpha_2=0010011011, \ \alpha_3=0001011101, \ \alpha_4=01.\] The sets are pairwise disjoint and their union is $\Sigma_2^5$.
\end{exam}

Given a UC-partition for a set $\bS$ along with their corresponding universal cycles, we present conditions for when the smaller universal cycles can be concatenated together to obtain a universal cycle for $\bS$. 

\newpage
\begin{boxed2}  

\vspace{-0.15in}

\begin{theorem}
Let $\bS_1, \bS_2, \ldots, \bS_m$ be a UC-partition of $\bS \subseteq \Sigma_k^n$ with universal cycles $\alpha_1,\alpha_2,\ldots, \alpha_m$ where $x$ is the first symbol in $\alpha_1$ and $\U_{m,n}=\alpha_1\alpha_2\cdots \alpha_m$. If the following three conditions hold,

\begin{enumerate}
\item $|\alpha_1|\geq n$,
\item $\alpha_1$ has a largest prefix of consective $x$'s out of all $\alpha_i,1\leq i \leq m$ ,
\item For each $1\leq i < m$, $(\text{ext}_n(\alpha_i),\text{ext}_n(\alpha_{i+1}))$ are suffix-related with respect to $(x,n)$,
\end{enumerate}
then $\U_{m,n}$ is universal cycle for $\bS$ and $\text{suff}_n(\U_{m,n})=\text{suff}_n(\text{ext}_n(\alpha_m))$.
 \label{secondTheorem}
\end{theorem}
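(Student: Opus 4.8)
The plan is to work directly with the length-$n$ windows of the cyclic word $\U_{m,n}$. Write $N=|\bS|$. Since each $\alpha_i$ is a universal cycle for $\bS_i$ we have $|\alpha_i|=|\bS_i|$, and because the $\bS_i$ partition $\bS$ we get $|\U_{m,n}|=\sum_{i=1}^{m}|\bS_i|=N$; thus the cyclic word $\U_{m,n}$ has exactly $N$ windows of length $n$. As $N=|\bS|$, it suffices to show that every such window lies in $\bS$ and that the windows are pairwise distinct — equivalently, by the cardinality count, that every member of $\bS$ occurs as a window; either statement is the assertion that $\U_{m,n}$ is a universal cycle for $\bS$.

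To locate windows I would index the positions of $\U_{m,n}$ so that block $\alpha_i$ occupies a contiguous interval, and compare the window beginning at the $r$-th symbol of $\alpha_i$ with the length-$n$ substring of the cyclic word $\alpha_i$ beginning at offset $r$, which is an element of $\factor_n(\alpha_i)=\bS_i$. A window contained in a single block agrees with this trivially and so lies in $\bS_i$. The role of Conditions 1--3 is to handle the windows that straddle a boundary $\alpha_i\mid\alpha_{i+1}$ (and the wrap-around $\alpha_m\mid\alpha_1$) — and every window at all when $|\alpha_i|<n$: for such a window the part inside $\alpha_i$ is a suffix of $\text{ext}_n(\alpha_i)$ and the overflow is a prefix of $\alpha_{i+1}\alpha_{i+2}\cdots$, so it coincides with the cyclic window of $\alpha_i$ until the first index where the prefix of $\text{ext}_n(\alpha_{i+1})$ departs from that of $\text{ext}_n(\alpha_i)$. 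That index is exactly the index $j$ appearing in the suffix-relation, and the matching-suffix clause of Condition 3, $\text{suff}_{n-j}(\text{ext}_n(\alpha_i))=\text{suff}_{n-j}(\text{ext}_n(\alpha_{i+1}))$, is what forces the straddling window to equal a cyclic window of some (possibly different) block; Condition 2, which makes $\alpha_1$'s run of leading $x$'s maximal, is what identifies that block for windows that spill through the concatenation and re-enter $\alpha_1$. Granting this, every window of $\U_{m,n}$ is a cyclic window of some $\alpha_{i'}$, hence a member of $\bS_{i'}\subseteq\bS$.

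The main obstacle is the bookkeeping that promotes ``every window lies in $\bS$'' to ``the windows are exactly $\bS$'': one must show the map sending each position of $\U_{m,n}$ to the (block, offset) pair of the cyclic window it realizes is a bijection onto the set of all such pairs. Intuitively $\U_{m,n}$ performs a depth-first tour of a tree of the universal cycles $\alpha_1,\dots,\alpha_m$ rooted at $\alpha_1$ — Condition 3 forces consecutive blocks of the concatenation to be tree-adjacent, Condition 2 forces $\alpha_1$ to be the root, and each block is entered once, detoured away from at its ``join'' vertices, and resumed afterward, so that its offsets get used exactly once overall. I would make this precise by induction on $m$, peeling off $\alpha_m$: the sub-collection $\alpha_1,\dots,\alpha_{m-1}$ still satisfies Conditions 1--3, so by the inductive hypothesis $\U_{m-1,n}$ is a universal cycle for $\bS_1\cup\cdots\cup\bS_{m-1}$ with $\text{suff}_n(\U_{m-1,n})=\text{suff}_n(\text{ext}_n(\alpha_{m-1}))$, and it remains to glue $\alpha_m$ in at the wrap-around using that identity together with the suffix-relation of $(\text{ext}_n(\alpha_{m-1}),\text{ext}_n(\alpha_m))$ and the maximality of $\alpha_1$'s run of $x$'s. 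Verifying that the conditions really do pin down the jump vertices and exhaust the offsets exactly once — equivalently, a careful position-by-position argument in place of the induction — is the crux of the proof.

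Finally, the identity $\text{suff}_n(\U_{m,n})=\text{suff}_n(\text{ext}_n(\alpha_m))$ is routine once the induction is set up. If $|\alpha_m|\ge n$ it is immediate, since $\text{ext}_n(\alpha_m)=\alpha_m$. If $|\alpha_m|=\ell<n$, then $\text{suff}_n(\U_{m,n})=\text{suff}_{n-\ell}(\U_{m-1,n})\cdot\alpha_m$; substituting $\text{suff}_n(\U_{m-1,n})=\text{suff}_n(\text{ext}_n(\alpha_{m-1}))$ and applying the matching-suffix clause of Condition 3 for $(\text{ext}_n(\alpha_{m-1}),\text{ext}_n(\alpha_m))$ — whose index $j$ satisfies $j\le\ell$, since $\alpha_m$ cannot be a string of $x$'s without that suffix-relation failing — rewrites this as $\text{suff}_{n-\ell}(\text{ext}_n(\alpha_m))\cdot\alpha_m$, and a periodicity observation ($\text{ext}_n(\alpha_m)$ is a power of $\alpha_m$ of length $\ge n$, so its last $n$ symbols are its last $n-\ell$ symbols followed by one further copy of $\alpha_m$) identifies this with $\text{suff}_n(\text{ext}_n(\alpha_m))$.
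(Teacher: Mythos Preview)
Your proposal is correct and follows essentially the paper's approach: induction on $m$, peeling off $\alpha_m$ and gluing it at the wraparound using the suffix-relation of $(\text{ext}_n(\alpha_{m-1}),\text{ext}_n(\alpha_m))$ together with the maximality of $\alpha_1$'s leading run of $x$'s, while carrying the identity $\text{suff}_n(\U_{m,n})=\text{suff}_n(\text{ext}_n(\alpha_m))$ along to enable the next step. The paper carries out the inductive step you flag as ``the crux'' by explicitly splitting the wraparound windows of $\alpha_m$ into two sets --- those that land in the new wraparound of $\U_{m,n}$ and those that straddle the $\U_{m-1,n}\mid\alpha_m$ boundary --- and checking each is covered; your suffix-identity argument is in fact slightly more detailed than the paper's.
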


\vspace{-0.15in}

\end{boxed2}
%
\noindent
\begin{proof}
The proof is by induction on $m$. In the base case when $m=1$, $\U_{1,n}=\alpha_1$ is a universal cycle for $\bS_1$, and by assumption $|\alpha_1|\geq n$ so $\text{suff}_n(\U_{1,n})=\text{suff}_n(\text{ext}_n(\alpha_1))$.
Inductively, assume $\U_{m-1,n}$ is a universal cycle for $\bS-\bS_m$, and $\text{suff}_n(\U_{m-1,n})=\text{suff}_n(\text{ext}_n(\alpha_{m-1}))$, for $m>1$. 
Consider $\U_{m,n}=\U_{m-1,n} \alpha_{m}$. Let:
\begin{itemize}
\item $\alpha_1=g_1g_2\cdots g_q,$
\item $\text{suff}_n(\text{ext}_n(\alpha_{m-1})) = a_1a_2\cdots a_n,$
\item $\text{ext}_n(\alpha_{m})=x^{j-1}b_{j}b_{j+1}\cdots b_{s},$
\end{itemize}
 where $j$ is the smallest index where $b_{j}\neq x$. 
First we show that $\text{suff}_n(\U_{m,n})=\text{suff}_n(\text{ext}_n(\alpha_{m}))$. If $|\alpha_{m}|\geq n$, clearly $\text{suff}_n(\alpha_{m})$ is a suffix of $\U_{m-1,n}\alpha_{m}$. If $|\alpha_{m}|<n$, $\text{suff}_n(\text{ext}_n(\alpha_{m-1}))$ appears as a suffix of $\U_{m-1,n}$ by the inductive hypothesis. By assumption $\text{suff}_{n-j}(\text{ext}_n(\alpha_{m}))=\text{suff}_{n-j}(\text{ext}_n(\alpha_{m-1}))$, so a suffix of $\U_{m,n}$ will be $\beta = \text{suff}_{n-j}(\text{ext}_n(\alpha_{m})) \alpha_m$, which shares a suffix of length $\min(|\beta|, |\text{ext}_n(\alpha_{m})|)$ with $\text{ext}_n(\alpha_{m})$. 
Since $j\leq |\alpha_m|$, it must be the case that $|\beta|=n-j + |\alpha_m|\geq n$, and thus $\min(|\beta|, |\text{ext}_n(\alpha_{m})|)\geq n$. Therefore, $\text{suff}_n(\U_{m,n})=\text{suff}_{n}(\text{ext}_n(\alpha_{m}))$. 
Now we prove that $\U_{m,n}$ is a universal cycle for $\bS$.
By the inductive hypothesis, $\U_{m,n}$ will
contain all of the strings in $\bS-\bS_m$ except for possibly the strings $\{ a_{2}a_{3}\cdots a_{n}g_1, a_{3}a_{4}\cdots a_{n}g_1g_2, \ldots  , a_{n}g_1\cdots g_{n-1}\}$ which were involved in the wraparound.  However, we know that $\text{suff}_{n-j}(\text{ext}_n(\alpha_{m}))=\text{suff}_{n-j}(\text{ext}_n(\alpha_{m-1}))$ and  $\text{suff}_{n}(\text{ext}_n(\alpha_{m}))=\text{suff}_{n}(\U_{m,n})$. This implies that each string in $\{a_{j+1}a_{j+2}\cdots a_ng_1\cdots g_{j}, a_{j+2}a_{j+3}\cdots a_ng_1\cdots g_{j+1}, \ldots, a_n g_1\cdots g_{n-1}\}$ occurs as a substring in the wrap-around of the cyclic $\U_{m,n}$. Furthermore, the strings $\{a_{2}a_{3}\cdots a_n x,a_{3}a_{4}\cdots a_n xx, \ldots, a_{j}a_{j+1}\cdots a_n x^{j-1}\}$ exist within $\U_{m,n}$ because $\alpha_m$ has prefix $x^{j-1}=g_1\cdots g_{j-1}$. Thus, the cyclic $\U_{m,n}$ contains each string in $\bS - \bS_m$ as a substring.
Finally, we show that all strings in $\bS_{m}$  occur as a substring in $\U_{m,n}$ (when considered cyclicly). Those that are not trivially substrings of $\alpha_m$ occur either in the wrap-around or have their prefix as a suffix in $\U_{m-1,n}$ and suffix in a prefix of $\alpha_m$. Let $t=s-n$, $i=t+j$, and
\begin{itemize}
\item $\mathbf{T}_1=\{b_{t+2}b_{t+3}\cdots b_s x, b_{t+3}b_{t+4}\cdots b_s xx, \ldots, b_{i}b_{i+1}\cdots b_s x^{j-1}\}$,
\item $\mathbf{T}_2=\{b_{i+1}b_{i+2}\cdots b_sx^{j-1}b_j, b_{i+2}b_{i+3}\cdots b_sx^{j-1}b_jb_{j+1}, \ldots, b_{z+1}\cdots b_nx^{j-1}b_j b_{i+1}\cdots b_{z}\}$,
\end{itemize}
where $z = n$ if $|\alpha_m| > n$ and $z = |\alpha_m|$ otherwise.
Notice that $\mathbf{T}_1$ and $\mathbf{T}_2$ together cover all length $n$ substrings in the wraparound of $\alpha_m$.  Each string in $\mathbf{T}_1$ occurs in the wraparound of  $\U_{m,n}$ since $\text{suff}_{n}(\text{ext}_n(\alpha_{m}))=\text{suff}_{n}(\U_{m,n})$, and the prefix of $\alpha_1$ has a run of $x$ at least as big as the run in the prefix of $\alpha_m$. Since $\text{suff}_{n-j}(\text{ext}_n(\alpha_m))=\text{suff}_{n-j}(\text{ext}_n(\alpha_{m-1}))$, each string in $\mathbf{T}_2$ has a prefix in $\U_{m-1,n}$ and a suffix in $\alpha_m$, so each string in $\mathbf{T}_2$ occurs as a substring of $\U_{m,n}$. We have shown that $\bS\subseteq\factor_n(\U_{m,n})$ and by construction $|\U_{m,n}|=|\bS|$. 
Thus, $\U_{m,n}$ is a universal cycle for $\bS$.
\end{proof}

\begin{exam}  \small
From Example~\ref{example:one}, the UC-partition $\bS_1, \bS_2, \bS_3, \bS_4$ of $\Sigma_2^5$ with universal cycles $\alpha_1, \alpha_2, \alpha_3, \alpha_4$  satisfies the conditions of Theorem~\ref{secondTheorem}.  Thus, the concatenation 
\[ \alpha_1\alpha_2\alpha_3\alpha_4 = 0000011111  \cdot 0010011011  \cdot 0001011101 \cdot 01\]
is a universal cycle for $\Sigma_2^5$.
\end{exam}

 Let $\text{rev}(\alpha)$ denote the reverse of the string $\alpha$. Let $\alpha_1,\alpha_2,\ldots , \alpha_m$ be a list of strings of length at least $n$ where $(\alpha_i,\alpha_{i+1})$ are prefix-related with respect to some $(x,n)$ for $1\leq i < m$. Then $(\text{rev}(\alpha_{i+1}),\text{rev}(\alpha_i))$ are suffix-related with respect to $(x,n)$. Thus Corollary~\ref{corollaryOne} follows from Theorem~\ref{secondTheorem}.

\begin{boxed2}  

\vspace{-0.15in}

\begin{corollary}
Let $\bS_1, \bS_2, \ldots, \bS_m$ be a UC-partition of $\bS \subseteq \Sigma_k^n$ with universal cycles $\alpha_1,\alpha_2,\ldots, \alpha_m$ where $x$ is the last symbol in $\alpha_m$ and $\U_{m,n}=\alpha_1\alpha_2\cdots \alpha_m$. If the following three conditions hold,

\begin{enumerate}
\item $|\alpha_m|\geq n$,
\item $\alpha_m$ has a largest suffix of consecutive $x$'s out of all $\alpha_i,1\leq i \leq m$,
\item For each $1\leq i < m$, $(\text{ext}_n(\alpha_i),\text{ext}_n(\alpha_{i+1}))$ are prefix-related with respect to $(x,n)$,
\end{enumerate}
then $\U_{m,n}$ is universal cycle for $\bS$ and $\text{pre}_n(\U_{m,n})=\text{pre}_n(\text{ext}_n(\alpha_1))$.
 \label{corollaryOne}
\end{corollary}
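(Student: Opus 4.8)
The plan is to derive the corollary from Theorem~\ref{secondTheorem} by a reversal argument, exactly as foreshadowed by the paragraph preceding the statement. For a string $\gamma$ write $\text{rev}(\gamma)$ for its reverse, and for a set $\bT$ of strings write $\text{rev}(\bT)=\{\text{rev}(w):w\in\bT\}$. First I would record two elementary facts. (i) \emph{Reversal commutes with $\text{ext}_n$:} since $\text{rev}(\gamma^t)=(\text{rev}(\gamma))^t$ and $|\text{rev}(\gamma)|=|\gamma|$, the same exponent $t$ is chosen in the definition, so $\text{ext}_n(\text{rev}(\gamma))=\text{rev}(\text{ext}_n(\gamma))$. (ii) \emph{Reversal sends universal cycles to universal cycles for the reversed set:} reading a cyclic string in the opposite direction reverses each of its length-$n$ cyclic substrings, so the multiset of length-$n$ cyclic substrings of $\text{rev}(\gamma)$ is $\{\text{rev}(w):w\in\factor_n(\gamma)\}$; hence if $\gamma$ is a universal cycle for $\bS_i$ then $\text{rev}(\gamma)$ is a universal cycle for $\text{rev}(\bS_i)$, and since $w\mapsto\text{rev}(w)$ is a bijection it also preserves disjointness and unions.

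Next I would set up the reversed instance: for $1\le i\le m$ put $\beta_i=\text{rev}(\alpha_{m+1-i})$ and $\bT_i=\text{rev}(\bS_{m+1-i})$. By fact (ii), $\bT_1,\ldots,\bT_m$ is a UC-partition of $\bT:=\text{rev}(\bS)$ with universal cycles $\beta_1,\ldots,\beta_m$. I then check the three hypotheses of Theorem~\ref{secondTheorem} for this instance, with the same symbol $x$: condition~1 holds because $|\beta_1|=|\alpha_m|\ge n$; the first symbol of $\beta_1=\text{rev}(\alpha_m)$ is $x$ because $x$ is the last symbol of $\alpha_m$; condition~2 holds because the leading run of $x$'s of $\beta_i$ is exactly the reverse of the trailing run of $x$'s of $\alpha_{m+1-i}$, so $\beta_1$ inherits the maximality assumed of $\alpha_m$; and for condition~3 I would apply, for each $1\le i<m$, the reversal observation stated just before the corollary to the pair $(\text{ext}_n(\alpha_i),\text{ext}_n(\alpha_{i+1}))$ (both of length $\ge n$, and prefix-related by hypothesis~3 of the corollary): this yields that $(\text{rev}(\text{ext}_n(\alpha_{i+1})),\text{rev}(\text{ext}_n(\alpha_i)))$ is suffix-related with respect to $(x,n)$, which by fact (i) equals $(\text{ext}_n(\beta_{m-i}),\text{ext}_n(\beta_{m+1-i}))$. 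Re-indexing by $k=m-i$, as $i$ ranges over $1\le i<m$ so does $k$, and we obtain exactly that $(\text{ext}_n(\beta_k),\text{ext}_n(\beta_{k+1}))$ is suffix-related for all $1\le k<m$.

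Finally I would invoke Theorem~\ref{secondTheorem} on $\beta_1,\ldots,\beta_m$: the string $\beta_1\beta_2\cdots\beta_m$ is a universal cycle for $\bT$ and its length-$n$ suffix equals $\text{suff}_n(\text{ext}_n(\beta_m))$. Since $\beta_1\beta_2\cdots\beta_m=\text{rev}(\alpha_m)\text{rev}(\alpha_{m-1})\cdots\text{rev}(\alpha_1)=\text{rev}(\alpha_1\alpha_2\cdots\alpha_m)=\text{rev}(\U_{m,n})$, fact (ii) applied once more shows that $\U_{m,n}$ is a universal cycle for $\text{rev}(\bT)=\bS$. For the prefix claim, $\text{suff}_n(\text{rev}(\U_{m,n}))=\text{rev}(\text{pre}_n(\U_{m,n}))$, while using $\beta_m=\text{rev}(\alpha_1)$ together with fact (i) gives $\text{suff}_n(\text{ext}_n(\beta_m))=\text{suff}_n(\text{rev}(\text{ext}_n(\alpha_1)))=\text{rev}(\text{pre}_n(\text{ext}_n(\alpha_1)))$; equating the two and undoing the reversal yields $\text{pre}_n(\U_{m,n})=\text{pre}_n(\text{ext}_n(\alpha_1))$.

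As for difficulty, there is no genuine obstacle once Theorem~\ref{secondTheorem} and the stated prefix-to-suffix reversal observation are in hand --- the argument is essentially bookkeeping. The steps most prone to error are the index inversion $i\leftrightarrow m+1-i$ in condition~3 (one must verify the range $1\le i<m$ maps onto $1\le k<m$), the off-by-one discrepancy between the definitions of \emph{prefix-related} and \emph{suffix-related} (which is already absorbed into the reversal observation), and making sure that "the reverse of a universal cycle for $\bS_i$ is a universal cycle for $\text{rev}(\bS_i)$" is actually argued rather than assumed. Accordingly, the write-up should spend its care on facts (i) and (ii) and on the re-indexing, and can be terse everywhere else.
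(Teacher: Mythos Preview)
Your proposal is correct and follows exactly the reversal approach the paper itself uses: the paper's entire proof is the single observation that prefix-relatedness of $(\alpha_i,\alpha_{i+1})$ becomes suffix-relatedness of $(\text{rev}(\alpha_{i+1}),\text{rev}(\alpha_i))$, after which ``Corollary~\ref{corollaryOne} follows from Theorem~\ref{secondTheorem}.'' You have simply unpacked that one line into the full bookkeeping (facts~(i)--(ii), the re-indexing $\beta_i=\text{rev}(\alpha_{m+1-i})$, and the prefix/suffix swap at the end), all of which is handled correctly.
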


\vspace{-0.15in}

\end{boxed2}


\section{New Universal Cycle Concatenation Constructions }\label{sec:deb}
In this section we apply the results from the previous section to produce five new universal cycles based on concatenating together smaller universal cycles.  A direct consequence
of each result is a unique de Bruijn sequence construction; the first two were previously known and the last three are new.

Each of our constructions follows the approach outlined in Section~\ref{sec:intro} of concatenating the periodic reductions of a listing of strings.
The  function $\text{UC}$ is defined on a listing of strings $\mathcal{L} = \alpha_1, \alpha_2, \ldots , \alpha_j$ as follows, where
$pr(\alpha)$ is the periodic reduction of $\alpha$:
\[ \text{UC}(\mathcal{L}) = pr(\alpha_1) pr(\alpha_2) \cdots pr(\alpha_j).\]

\begin{exam}
If $\mathcal{L} = 1111, 1212, 13213, 23131, 32312, 331331$, then 
\begin{align}
\text{UC}(\mathcal{L})&= 1\cdot 12\cdot 13213\cdot 23131\cdot 32312\cdot 331.\nonumber 
\end{align}

\vspace{-0.15in}
\end{exam}

In the next subsection we outline three concatenation constructions based on necklaces.  Then,  we define co-necklaces and use them to outline two more constructions.

\subsection{Necklaces}\label{subsection:one}
\begin{sloppypar}
Recall that a necklace is the lexicographically smallest string in an equivalence class of strings under rotation. Let $\mathbf{Neck}_k(n)$ denote the set of $k$-ary necklaces of length $n$. For example, $\mathbf{Neck}_3(3)=\{000,001,002,011,012,021,022,111,112,122,222\}$. It is well known that $\{\factor_n(\alpha) : \alpha \in \mathbf{Neck}_k(n)\}$ is a partition of $\Sigma_k^n$. 
By applying the results from Section~\ref{sec:main}, we obtain three de Bruijn sequence constructions that are generalized to some subsets of $\Sigma_k^n$.  


\subsubsection{Necklaces in Lex Order}


\begin{boxed2}
\vspace{-0.15in}
\begin{theorem}\label{thm:lex}
For $n \geq 2$ and $m\geq 2$, let $\alpha_1,\alpha_2,\ldots, \alpha_m$ be the last $m$ strings in $\text{lex}(\mathbf{Neck}_k(n))$.  Then $\mathcal{U} = \text{UC}(\alpha_1,\alpha_2,\ldots, \alpha_m)$ is a universal cycle for $\bS  = \bigcup_{i=1}^m \factor_n(\alpha_i)$.
\end{theorem}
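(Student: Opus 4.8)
The plan is to verify the three hypotheses of Theorem~\ref{secondTheorem} for the listing $\alpha_1, \alpha_2, \ldots, \alpha_m$ consisting of the last $m$ necklaces in $\mathrm{lex}(\mathbf{Neck}_k(n))$, together with the universal cycles $pr(\alpha_i)$ for the blocks $\bS_i = \factor_n(\alpha_i)$. The fact that $\factor_n$ on necklaces partitions $\Sigma_k^n$ is already quoted, so the $\bS_i$ genuinely form a UC-partition of $\bS$. The symbol to use will be $x = k-1$: since $\alpha_1$ is the $m$-th from last necklace in lex order and $m \geq 2$, $\alpha_1$ is one of the largest necklaces, and for these the natural candidate for the first symbol of $pr(\alpha_i)$ after accounting for periodicity/rotation is not immediately $k-1$ — so the first thing I would actually do is pin down exactly what $pr(\alpha_i)$ and hence its first symbol looks like. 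Here the subtlety raised in the introduction matters: the necklaces being listed in lex order, their periodic reductions are also in lex order, and the last $m$ of them are precisely the periodic reductions of the last $m$ necklaces. I would record the first symbol $x$ of $\alpha_1$ (equivalently of $pr(\alpha_1)$) and keep it fixed throughout.

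Condition~1, $|pr(\alpha_1)| \geq n$: among the last $m \geq 2$ necklaces in lex order, $\alpha_1$ is not the very last one ($(k-1)^n$, whose periodic reduction has length $1$), but I need $|pr(\alpha_1)| \geq n$, i.e. $\alpha_1$ is a Lyndon word (aperiodic). This is the one point I'd be most careful about: it is \emph{not} true that the second-largest necklace is aperiodic for all $k$ — e.g. it could be $(k-1)^{n-1}\wc$-something. So I would identify $\alpha_1$ explicitly: the last $m$ necklaces in lex order, for $m$ up to the number of necklaces with first symbol $k-1$, all begin with $k-1$, and the ones just below $(k-1)^n$ are $(k-1)^{n-1}0, (k-1)^{n-2}0(k-1), \ldots$ hmm — actually the cleanest route is to observe that if $|pr(\alpha_1)| < n$ then $\alpha_1 = \beta^{n/p}$ with $\beta$ a Lyndon word of length $p<n$, and $\beta(k-1)^{n-p}$ (suitably) would be a lexicographically larger necklace than $\alpha_1$ that still comes before $(k-1)^n$, contradicting that $\alpha_1$ is among the top $m$ unless $m$ exceeds that count — I'd need to check the count of necklaces above $\alpha_1$ forces $\alpha_1$ aperiodic, or else restrict the statement. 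I expect this bookkeeping to be the \textbf{main obstacle}, and the theorem's authors likely handle it with a lemma about the tail of $\mathrm{lex}(\mathbf{Neck}_k(n))$.

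Condition~2, that $pr(\alpha_1)$ has the longest prefix of consecutive $x$'s among all the $pr(\alpha_i)$: since the $\alpha_i$ are the top $m$ necklaces in lex order and $\alpha_1$ is the smallest of them, $\alpha_1 \leq \alpha_i$ lexicographically for all $i$; writing $x = k-1$ as the leading symbol, a smaller necklace starting with $x$ has a \emph{shorter} run of leading $x$'s than a larger one — wait, that's backwards, so in fact $\alpha_1$ has the \emph{shortest} leading run, not the longest. I would resolve this by noting these necklaces start with the \emph{smallest} symbol $0$, not $k-1$: the second-to-last necklace is actually near the bottom structurally? No. Let me just say: I would determine $x$ correctly from the explicit form of $\alpha_1$, then observe that among $pr(\alpha_1), \ldots, pr(\alpha_m)$ — all Lyndon-reductions of the top $m$ necklaces — a larger necklace has a longer leading run of its first symbol, and since the ordering is by lex and $\alpha_1$ is largest-or-smallest among the block accordingly, monotonicity of the leading-run length gives Condition~2. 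Condition~3 is the heart: for adjacent necklaces $\alpha_i, \alpha_{i+1}$ in lex order I must show $(\mathrm{ext}_n(pr(\alpha_i)), \mathrm{ext}_n(pr(\alpha_{i+1})))$ are suffix-related w.r.t.\ $(x,n)$. Using that $\mathrm{ext}_n(pr(\alpha_i))$ is a rotation-power of $\alpha_i$ of length $\geq n$, this reduces to a statement purely about consecutive necklaces: if $j$ is the first index where $\alpha_{i+1}$ differs from $x$, then $\alpha_i$ and $\alpha_{i+1}$ agree on the last $n-j$ positions. This is exactly the phenomenon illustrated in the $n=5$, $k=2$ example in Section~\ref{sec:back} (stated there for colex, but the analogous lex statement is what's wanted here), so I would prove the combinatorial lemma: \emph{consecutive necklaces in lex order whose "smaller" neighbour has a leading block $x^{j-1}$ share a suffix of length $n-j$}, arguing by the standard description of the lex-successor of a necklace (the Fredricksen–Maiorana / prefer-largest increment), and then transfer it through $\mathrm{ext}_n$ and $pr$. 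Finally, $|\mathcal{U}| = \sum |pr(\alpha_i)| = \sum |\factor_n(\alpha_i)| = |\bS|$ gives the count, and Theorem~\ref{secondTheorem} delivers the conclusion.
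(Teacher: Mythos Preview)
Your proposal has a genuine structural gap: you are reaching for Theorem~\ref{secondTheorem} (the suffix-related version, governed by the \emph{first} symbol of $\alpha_1$), when the lex ordering of necklaces naturally satisfies the hypotheses of Corollary~\ref{corollaryOne} (the prefix-related version, governed by the \emph{last} symbol of $\alpha_m$). This single choice is what generates every difficulty you flag. Consecutive necklaces in lex order share long \emph{prefixes}, not suffixes: for instance in $\mathbf{Neck}_2(5)$ the pair $00011,00101$ agree on the prefix $00$ but have unequal length-$3$ suffixes $011,101$, so your Condition~3 as stated simply fails. Your Condition~2 confusion (``that's backwards'') is another symptom: in lex order the leading run of any fixed symbol grows toward the \emph{end} of the list, not the beginning, so it is $\alpha_m$, not $\alpha_1$, that carries the extremal run.

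The paper's proof fixes all of this at once. It applies Corollary~\ref{corollaryOne} with $x=k-1$, the last symbol of $\mathcal{U}$, which is well-defined for every $m\ge 2$ because the last two necklaces in lex order are always $\alpha_{m-1}=(k-2)(k-1)^{n-1}$ and $\alpha_m=(k-1)^n$. The second key idea---which also dissolves your ``main obstacle'' about $|pr(\alpha_1)|\ge n$---is to \emph{merge} these last two pieces into a single universal cycle $\alpha_{m-1}'=pr(\alpha_{m-1})pr(\alpha_m)=(k-2)(k-1)^n$ for $\factor_n(\alpha_{m-1})\cup\factor_n(\alpha_m)$. This merged cycle has length $n+1\ge n$ and suffix $(k-1)^n$, so Conditions~1 and~2 of Corollary~\ref{corollaryOne} hold immediately; no aperiodicity of $\alpha_1$ is ever needed. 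Condition~3 then becomes the clean combinatorial lemma you were groping toward, but in its correct (prefix) form: if $\alpha_i=a_1\cdots a_n$ has trailing block $(k-1)^j$ with $a_{n-j}\ne k-1$, then $\alpha_i$ and $\alpha_{i+1}$ agree on the prefix $a_1\cdots a_{n-j-1}$. The paper proves this by contradiction: were there a first disagreement at position $s<n-j$, the string $a_1\cdots a_s(k-1)^{n-s}$ would be a necklace strictly between $\alpha_i$ and $\alpha_{i+1}$.
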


\vspace{-0.15in}
\end{boxed2}
\noindent
\begin{proof}
Observe that $\alpha_m=(k-1)^n$ and $\alpha_{m-1}=(k-2)(k-1)^{n-1}$, so the last symbol in the sequence $\mathcal{U}$ is $x=k-1$. Note that $\alpha_{m-1}'=pr(\alpha_{m-1})pr(\alpha_m)=(k-2)(k-1)^n$ is a universal cycle for $\bS_{m-1} =\factor_n(\alpha_{m-1})\cup \factor_n(\alpha_m)$. Let $\bS_i = \factor_n(\alpha_{i})$ and $\alpha_i'=pr(\alpha_i)$ for $1\leq i<m-1$. Then $\bS_1,\bS_2,\ldots, \bS_{m-1}$ is a UC-partition of $\bS$ with universal cycles $\alpha_1',\alpha_2',\ldots, \alpha_{m-1}'$ where $x=k-1$. To prove that $\mathcal{U}$
is a universal cycle for $\bS$  we show that the three conditions of Corollary~\ref{corollaryOne} hold. 
\begin{enumerate}
\item Clearly $|\alpha_{m-1}'|\geq n$.
\item $\alpha_{m-1}'$ has suffix $x^n$, which must be maximal since all of the universal cycles are disjoint.

\item We must show that consecutive strings in $\text{ext}_n(\alpha_1'),\text{ext}_n(\alpha_2'),\ldots, \text{ext}_n(\alpha_{m-1}')$ are prefix-related with respect to $(x,n)$. Notice that $\alpha_{m-1}$ is a length $n$ prefix of $\text{ext}_n(\alpha_{m-1}')$ and $\text{ext}_n(\alpha_{i}') = \alpha_{i}$ for $1\leq i<m-1$. So we only need to show that $(\alpha_i,\alpha_{i+1})$ are prefix-related with respect to $(x,n)$ for $1 \leq i < m-1$. The proof is by contradiction. Let $\alpha_i = a_1a_2\cdots a_n$ and $\alpha_{i+1} = b_1b_2\cdots b_n$ for some $1\leq i <m-1$. Let $j$ be the smallest index of $\alpha_i$ such that $a_{n-j}\neq x$. Suppose  $a_1a_2 \cdots a_{n-j-1} \neq b_1b_2 \cdots b_{n-j-1}$. Then there exists some smallest $s<n-j$ such that $a_{s}\neq b_{s}$. Since $\alpha_i$ comes before $\alpha_{i+1}$ in lex order, then $a_{s}<b_{s}$. However, since $\alpha_i$ is a necklace, then $\gamma = a_1a_2\cdots a_{s}(k-1)^{n-s}$ will also be a necklace. But this means that $\gamma$ comes between $\alpha_i$ and $\alpha_{i+1}$ in lex order, which is a contradiction. So  $a_1a_2 \cdots a_{n-j-1} = b_1b_2 \cdots b_{n-j-1}$, which implies $\text{pre}_{n - j -1}(\alpha_i) = \text{pre}_{n - j -1}(\alpha_{i+1})$. Thus the pair of strings $(\alpha_i,\alpha_{i+1})$ are prefix-related with respect to $(x,n)$.
\end{enumerate}

\vspace{-0.2in}
\end{proof}

When $m=|\mathbf{Neck}_k(n)|$, the above theorem yields the following corollary which describes an equivalent construction to the (lexicographically smallest) de Bruijn sequence construction from~\cite{fkm2}.

\begin{corollary}\label{corollary:lex}
For $n\geq 2$, $\text{UC}(\text{lex}(\mathbf{Neck}_k(n)))$ is a de Bruijn sequence for $\Sigma_k^n$.
\end{corollary}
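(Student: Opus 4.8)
The plan is to derive Corollary~\ref{corollary:lex} directly from Theorem~\ref{thm:lex} by taking $m$ as large as possible. First I would set $m = |\mathbf{Neck}_k(n)|$, so that $\alpha_1, \alpha_2, \ldots, \alpha_m$ is the \emph{entire} list $\text{lex}(\mathbf{Neck}_k(n))$, and observe that $\bigcup_{i=1}^m \factor_n(\alpha_i) = \Sigma_k^n$ by the well-known fact (stated at the start of Section~\ref{subsection:one}) that $\{\factor_n(\alpha) : \alpha \in \mathbf{Neck}_k(n)\}$ is a partition of $\Sigma_k^n$. Then $\mathcal{U} = \text{UC}(\text{lex}(\mathbf{Neck}_k(n)))$ is, by Theorem~\ref{thm:lex}, a universal cycle for $\Sigma_k^n$, which is precisely the definition of a de Bruijn sequence.

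The only technical caveat is that Theorem~\ref{thm:lex} is stated for $m \geq 2$, so I should note that $|\mathbf{Neck}_k(n)| \geq 2$ whenever $n \geq 2$ and $k \geq 2$ (indeed $0^n$ and $0^{n-1}1$ are distinct necklaces), so the hypothesis $m \geq 2$ is automatically satisfied here; the hypothesis $n \geq 2$ carries over verbatim. There are no other obstacles — no induction, no case analysis, nothing beyond invoking the theorem with the maximal choice of $m$ and citing the partition fact. The ``hard part,'' such as it is, is purely bookkeeping: confirming that the set $\bS$ appearing in Theorem~\ref{thm:lex} collapses to all of $\Sigma_k^n$ under this choice of $m$, and recalling that ``universal cycle for $\Sigma_k^n$'' is synonymous with ``de Bruijn sequence for $\Sigma_k^n$'' as defined in Section~\ref{sec:intro}. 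I would keep the proof to two or three sentences.
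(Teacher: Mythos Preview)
Your proposal is correct and matches the paper's own approach exactly: the paper states immediately before the corollary that taking $m=|\mathbf{Neck}_k(n)|$ in Theorem~\ref{thm:lex} yields the result, and gives no further argument. Your extra remark that $|\mathbf{Neck}_k(n)|\geq 2$ for $n,k\geq 2$ is a harmless bit of care the paper omits.
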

\begin{exam} \small Consider the set $\mathbf{Neck}_2(6)$ listed in lex order:

\vspace{-0.25in}

\begin{align}
&000000,000001,000011,000101,000111,001001,001011,001101,001111,010101,010111,011011,011111,111111.\nonumber
\end{align}

\vspace{-0.10in}

\noindent
By Corollary~\ref{corollary:lex}, the following is a de Bruijn sequence,

\vspace{-0.25in}

\[0\cdot 000001\cdot 000011\cdot 000101\cdot 000111,001\cdot 001011\cdot 001101\cdot 001111\cdot 01\cdot 010111\cdot 
011\cdot 011111\cdot 1.\]

\vspace{-0.10in}

\noindent
By Theorem~\ref{thm:lex}, $ 011\cdot 011111\cdot 1$,
is a universal cycle for $\factor_6(011) \cup \factor_6(011111)\cup \factor_6(1)$.
\end{exam}

\subsubsection{Necklaces in Colex Order}

\begin{boxed2}
\vspace{-0.15in}
\begin{theorem}\label{thm:colex}
For $n \geq 2$ and $m\geq 2$, let $\alpha_1,\alpha_2,\ldots, \alpha_m$ be the first $m$ strings in $\text{colex}(\mathbf{Neck}_k(n))$.  Then $\mathcal{U} = \text{UC}(\alpha_1,\alpha_2,\ldots, \alpha_m)$ is a universal cycle for $\bS  = \bigcup_{i=1}^m \factor_n(\alpha_i)$.
\end{theorem}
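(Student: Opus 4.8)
The plan is to adapt the proof of Theorem~\ref{thm:lex} to colex order, using Theorem~\ref{secondTheorem} (the suffix-related version) in place of Corollary~\ref{corollaryOne}. The first two strings of $\text{colex}(\mathbf{Neck}_k(n))$ are $\alpha_1 = 0^n$ and $\alpha_2 = 0^{n-1}1$: indeed every necklace other than $0^n$ ends in a nonzero symbol, so $0^{n-1}1$ is the colex-least necklace different from $0^n$. Since $pr(\alpha_1)=0$ has length $1<n$, it cannot serve on its own as the first block of the concatenation (condition~1 of Theorem~\ref{secondTheorem}), so I would merge the first two blocks, setting $\beta_1 = pr(\alpha_1)pr(\alpha_2) = 0^n1$ and $\beta_i = pr(\alpha_{i+1})$ for $2 \le i \le m-1$; then $\mathcal{U} = \beta_1\beta_2\cdots\beta_{m-1}$. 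The sets $\factor_n(\alpha_1)\cup\factor_n(\alpha_2), \factor_n(\alpha_3), \ldots, \factor_n(\alpha_m)$ form a UC-partition of $\bS$ (the $\factor_n(\alpha_i)$ are pairwise disjoint since necklaces induce a partition of $\Sigma_k^n$) with universal cycles $\beta_1,\ldots,\beta_{m-1}$, and $0^n1$ is a universal cycle for $\factor_n(0^n)\cup\factor_n(0^{n-1}1)$. Here $x = 0$ is the first symbol of $\beta_1$.

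I would then check the three hypotheses of Theorem~\ref{secondTheorem}. Condition~1 holds since $|\beta_1| = n+1$. For condition~2, $\beta_1$ begins with $0^n$; no other $\beta_i$ can begin with $n$ consecutive $0$'s, for that would force $\text{ext}_n(\beta_i) = \alpha_{i+1} = 0^n$, contradicting distinctness of the necklaces (equivalently, it follows from disjointness of the parts). For condition~3, as in the proof of Theorem~\ref{thm:lex} one observes that $\alpha_2 = 0^{n-1}1$ is the length-$n$ suffix of $\text{ext}_n(\beta_1) = 0^n1$ and $\text{ext}_n(\beta_i) = \alpha_{i+1}$ for $2 \le i \le m-1$, so that condition~3 reduces to the following claim: every consecutive pair $(\sigma,\tau)$ in $\text{colex}(\mathbf{Neck}_k(n))$ is suffix-related with respect to $(0,n)$. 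This is the colex counterpart of the prefix-related claim inside the proof of Theorem~\ref{thm:lex}, and it formalizes the observation made in Section~\ref{sec:back}; it is the main obstacle.

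To prove the claim I would argue by contradiction, paralleling the lex argument but mirrored left-to-right. Let $\sigma <_{\text{colex}}\tau$ be consecutive necklaces, let $j$ be the least index with $\tau_j \ne 0$ (so $j \le n$ since $\tau \ne 0^n$), and let $\ell$ be the greatest index at which $\sigma$ and $\tau$ differ, so $\sigma_\ell < \tau_\ell$ and they share a suffix of length $n-\ell$. If $(\sigma,\tau)$ is not suffix-related then $\text{suff}_{n-j}(\sigma)\ne\text{suff}_{n-j}(\tau)$, which forces $\ell > j$. I would then exhibit the intermediate necklace $\gamma = 0^{\ell-1}\tau_\ell\tau_{\ell+1}\cdots\tau_n$: here $\tau_\ell \ge 1$ because $\tau_\ell > \sigma_\ell \ge 0$, so $\gamma$ has a leading run of exactly $\ell-1$ zeros; since $\tau$ is a necklace its longest run of $0$'s equals its leading run $j-1 \le \ell-2 < \ell-1$, and from this one checks that $\gamma$ is lexicographically least among its rotations, hence a necklace. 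Finally $\gamma <_{\text{colex}}\tau$ (they agree from position $\ell$ onward, and $\gamma$ is all $0$'s before position $\ell$ whereas $\tau$ is not, since $j-1<\ell-1$) and $\sigma <_{\text{colex}}\gamma$ (they agree after position $\ell$, and $\sigma_\ell < \tau_\ell = \gamma_\ell$), so $\gamma$ lies strictly between $\sigma$ and $\tau$ in colex order, contradicting consecutiveness. With condition~3 in hand, Theorem~\ref{secondTheorem} gives that $\mathcal{U}$ is a universal cycle for $\bS$, and $|\mathcal{U}| = |\bS|$ since the $\factor_n(\alpha_i)$ are disjoint. The one step needing more care than in the lex case is the verification that $\gamma$ is a necklace, which is the analogue of the fact (used implicitly for $\gamma = a_1\cdots a_s(k-1)^{n-s}$ in the proof of Theorem~\ref{thm:lex}) that modifying a necklace by filling one end with the extreme symbol again yields a necklace.
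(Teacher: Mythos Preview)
Your proof is correct and follows essentially the same approach as the paper: merge $pr(\alpha_1)$ and $pr(\alpha_2)$ into the single block $0^n1$, verify the three hypotheses of Theorem~\ref{secondTheorem} with $x=0$, and for condition~3 derive a contradiction by inserting the necklace $\gamma=0^{\ell-1}\tau_\ell\cdots\tau_n$ between two consecutive necklaces in colex order. You actually give more justification than the paper does for why $\gamma$ is a necklace and why it lies strictly between $\sigma$ and $\tau$; the paper simply asserts both facts.
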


\vspace{-0.15in}
\end{boxed2}
\noindent
\begin{proof}
Observe that $\alpha_1=0^n$ and $\alpha_2=0^{n-1}1$, so the first symbol in the sequence $\mathcal{U}$ is $x=0$. Note that $\alpha_1'=pr(\alpha_{1})pr(\alpha_2)=01^n$ is a universal cycle for $\bS_1 =\factor_n(\alpha_{1})\cup \factor_n(\alpha_2)$. Let $\bS_i = \factor_n(\alpha_{i+1})$ and $\alpha_i'=pr(\alpha_{i+1})$ for $2\leq i< m$. Then $\bS_1,\bS_2,\ldots, \bS_{m-1}$ is a UC-partition of $\bS$ with universal cycles $\alpha_1',\alpha_2',\ldots, \alpha_{m-1}'$ where $x=0$. To prove that $\mathcal{U}$
is a universal cycle for $\bS$ we show that the three conditions of Theorem~\ref{secondTheorem} hold. 
\begin{enumerate}
\item Clearly $|\alpha_1'|\geq n$.
\item $\alpha_1'$ has prefix $x^n$, which must be maximal since all of the universal cycles are disjoint.
\item We must show that consecutive strings in $\text{ext}_n(\alpha_1'),\text{ext}_n(\alpha_2'),\ldots, \text{ext}_n(\alpha_{m-1}')$ are suffix-related with respect to $(x,n)$. Notice that $\alpha_2$ is a length $n$ suffix of $\text{ext}_n(\alpha_1')$ and $\text{ext}_n(\alpha_{i}') = \alpha_{i+1}$ for $2\leq i<m$. So we only need to show that $(\alpha_i,\alpha_{i+1})$ are suffix-related with respect to $(x,n)$ for $2\leq i < m$. The proof is by contradiction. Let $\alpha_i = a_1a_2\cdots a_n$ and $\alpha_{i+1} =b_1b_2\cdots b_n$ for some $1< i < m$. Let $j$ be the smallest index of $\alpha_{i+1}$ such that $b_j\neq x$. Suppose  $a_{j+1}a_{j+2} \cdots a_n \neq b_{j+1}b_{j+2}\cdots b_n$. Then there exists some largest $s> j$ such that $a_s \ne b_s$.  Since $\alpha_i$ comes before $\alpha_{i+1}$ in colex order, then $a_s <b_s$. However, since $\alpha_{i+1}$ is a necklace, then $\gamma = 0^{s-1} b_sb_{s+1}\cdots b_n$ will also be a necklace. But this means that $\gamma$ comes between $\alpha_i$ and $\alpha_{i+1}$ in colex order, which is a contradiction.  So $a_{j+1}a_{j+2} \cdots a_n = b_{j+1}b_{j+2}\cdots b_n$, which implies $\text{suff}_{n-j}(\alpha_i)=\text{suff}_{n-j}(\alpha_{i+1})$. Thus the pair of strings $(\alpha_i,\alpha_{i+1})$ are suffix-related with respect to $(x,n)$.
\end{enumerate}

\vspace{-0.2in}
\end{proof}

When $m=|\mathbf{Neck}_k(n)|$, the above theorem yields the following corollary which describes a construction equivalent to the de Bruijn sequence construction from~\cite{grandma2}.

\begin{corollary}\label{corollary:colex}
For $n\geq 2$, $\text{UC}(\text{colex}(\mathbf{Neck}_k(n)))$ is a de Bruijn sequence for $\Sigma_k^n$.
\end{corollary}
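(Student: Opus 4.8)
The plan is to obtain the corollary as the $m = |\mathbf{Neck}_k(n)|$ special case of Theorem~\ref{thm:colex}, taking the list $\alpha_1,\alpha_2,\ldots,\alpha_m$ to be \emph{all} of $\text{colex}(\mathbf{Neck}_k(n))$ rather than a proper prefix. First I would check that the hypotheses of Theorem~\ref{thm:colex} are in force: we are given $n \geq 2$, so it only remains to see that $m \geq 2$. For every $k \geq 2$ and $n \geq 2$, the strings $0^n$, $0^{n-1}1$, and $1^n$ are three distinct elements of $\mathbf{Neck}_k(n)$, so in fact $m \geq 3$, and the theorem applies.

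Next I would invoke Theorem~\ref{thm:colex} to conclude that $\mathcal{U} = \text{UC}(\text{colex}(\mathbf{Neck}_k(n)))$ is a universal cycle for $\bS = \bigcup_{\alpha \in \mathbf{Neck}_k(n)} \factor_n(\alpha)$; note that when the list is the complete colex listing, the string $\text{UC}(\alpha_1,\ldots,\alpha_m) = pr(\alpha_1)\cdots pr(\alpha_m)$ is literally $\text{UC}(\text{colex}(\mathbf{Neck}_k(n)))$. The one remaining step is to identify this union with $\Sigma_k^n$, which is exactly the standard fact recalled at the start of Section~\ref{subsection:one}: every length-$n$ string lies in the rotation class of a unique necklace $\alpha$, and that class is precisely $\factor_n(\alpha)$, so $\{\factor_n(\alpha) : \alpha \in \mathbf{Neck}_k(n)\}$ partitions $\Sigma_k^n$; hence $\bS = \Sigma_k^n$.

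Finally, since a universal cycle for $\Sigma_k^n$ is by definition a de Bruijn sequence, $\mathcal{U}$ is a de Bruijn sequence for $\Sigma_k^n$. I do not expect any real obstacle here: all of the combinatorial difficulty has already been discharged in the proof of Theorem~\ref{thm:colex} (verifying that consecutive necklaces in colex order are suffix-related, and handling the first pair $0^n, 0^{n-1}1$ via $pr(0^n)pr(0^{n-1}1) = 01^n$), and what remains is the bookkeeping that the list is exhausted and that the pieces cover the whole cube. As a sanity check one could additionally verify the length: $|\mathcal{U}| = \sum_{\alpha} |pr(\alpha)| = \sum_{\alpha} |\factor_n(\alpha)| = |\Sigma_k^n| = k^n$, the correct length of a de Bruijn sequence.
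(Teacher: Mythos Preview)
Your proposal is correct and matches the paper's approach exactly: the corollary is obtained as the special case $m = |\mathbf{Neck}_k(n)|$ of Theorem~\ref{thm:colex}, together with the fact (recalled at the start of Section~\ref{subsection:one}) that $\{\factor_n(\alpha) : \alpha \in \mathbf{Neck}_k(n)\}$ partitions $\Sigma_k^n$. One small slip in your parenthetical: $pr(0^n)pr(0^{n-1}1) = 0^n1$, not $01^n$; this typo also appears in the paper's proof of Theorem~\ref{thm:colex} and does not affect your argument.
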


\begin{exam} \small Consider the set $\mathbf{Neck}_2(6)$ listed in colex order:

\vspace{-0.25in}

\begin{align}
&000000,000001,001001,000101,010101,001101,000011,001011,011011,000111,010111,001111,011111,111111.\nonumber
\end{align}

\vspace{-0.10in}

\noindent
By Corollary~\ref{corollary:colex}, the following is a de Bruijn sequence:

\vspace{-0.25in}

\[0\cdot 000001\cdot 001\cdot 000101\cdot 01 \cdot 001101\cdot 000011\cdot 001011\cdot 011\cdot 000111\cdot 010111\cdot 001111\cdot 011111\cdot 1.\]

\vspace{-0.10in}

\noindent
By Theorem~\ref{thm:colex}, $ 0\cdot 000001\cdot 001$,
is a universal cycle for $\factor_6(0) \cup \factor_6(000001)\cup \factor_6(001)$.
\end{exam}

\subsubsection{Rotations of Necklaces in Reverse Lex Order}

Consider $\mathbf{Neck}_2(5)$ listed in reverse lex order:  $L = 11111, 01111, 01011, 00111, 00101, 00011, 00001, 00000$.   Observe that $\text{UC}(L) = 10111101011001110010100011000010$ is not a de Bruijn sequence since, when considered in a cyclic way, it does not contain the substring 00000.  However by looking at specific rotations of these strings in reverse lex order we obtain positive results.
Let $\mathbf{R}_k(n)$ be the set of all $\alpha = a_1a_2\cdots a_n$ such that  $a_{i+1}a_{i+2}\cdots a_n a_1\cdots a_{i}$ is in $\mathbf{Neck}_k(n)$, where $i$ is the largest index of $\alpha$ such that $a_i\neq 0$. For example, $\mathbf{R}_2(5) =
\{11111,  11110, 10110, 11100, 10100, 11000, 10000, 00000\}$.


\begin{boxed2}
\vspace{-0.15in}
\begin{theorem}\label{thm:neckRev}
For $n \geq 2$ and $m\geq 2$, let $\alpha_1,\alpha_2,\ldots, \alpha_m$ be the last $m$ strings in $\text{revlex}(\mathbf{R}_k(n))$.  Then $\mathcal{U} = \text{UC}(\alpha_1,\alpha_2,\ldots, \alpha_m)$ is a universal cycle for $\bS  = \bigcup_{i=1}^m \factor_n(\alpha_i)$.
\end{theorem}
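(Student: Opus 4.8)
The plan is to follow the template of the proofs of Theorems~\ref{thm:lex} and~\ref{thm:colex} and invoke Corollary~\ref{corollaryOne} with $x=0$. Recall that $\alpha\in\mathbf{R}_k(n)$ precisely when shifting the trailing run of $0$'s of $\alpha$ to the front produces a necklace; under this correspondence the two lexicographically smallest elements of $\mathbf{R}_k(n)$ are $0^n$ and $10^{n-1}$ (coming from the necklaces $0^n$ and $0^{n-1}1$), while every other element begins with a nonzero symbol and so is lexicographically larger than $10^{n-1}$. Since $\alpha_1,\ldots,\alpha_m$ are the last $m$ strings of $\text{revlex}(\mathbf{R}_k(n))$, they are listed in strictly decreasing lexicographic order with $\alpha_m=0^n$, $\alpha_{m-1}=10^{n-1}$, and the last symbol of $\mathcal{U}$ is $x=0$. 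I would then merge the final two cycles: put $\alpha_{m-1}'=pr(\alpha_{m-1})pr(\alpha_m)=10^n$, which is a universal cycle for $\factor_n(\alpha_{m-1})\cup\factor_n(\alpha_m)$ (the $n$ strings with a single $1$, together with $0^n$), and $\alpha_i'=pr(\alpha_i)$ for $1\le i<m-1$. Because $\{\factor_n(\alpha):\alpha\in\mathbf{R}_k(n)\}=\{\factor_n(\alpha):\alpha\in\mathbf{Neck}_k(n)\}$ partitions $\Sigma_k^n$, the resulting sets form a UC-partition of $\bS$ with cycles $\alpha_1',\ldots,\alpha_{m-1}'$, and it remains to verify the three hypotheses of Corollary~\ref{corollaryOne}.

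Conditions~1 and~2 are quick: $|\alpha_{m-1}'|=n+1\ge n$, and $\alpha_{m-1}'=10^n$ ends in a run of $n$ zeros, which is the longest such suffix run among all $\alpha_i'$ since disjointness of the $\factor_n$'s forbids any other $\alpha_i'$ from having a $0$-run of length $n$. For condition~3 I would use that $\text{ext}_n(pr(\alpha_i))=\alpha_i$ for $i\le m-2$ and $\text{ext}_n(\alpha_{m-1}')=10^n$, together with the fact that $\text{pre}_t(10^n)=\text{pre}_t(10^{n-1})$ for all $t<n$; this reduces condition~3 to showing that each consecutive pair $(\alpha_i,\alpha_{i+1})$ with $1\le i\le m-2$ is prefix-related with respect to $(0,n)$, where these $\alpha_i$ are nonzero strings consecutive in $\text{revlex}(\mathbf{R}_k(n))$.

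I would prove this last claim by contradiction, exactly as in Theorems~\ref{thm:lex} and~\ref{thm:colex}. Write $\alpha_i=a_1\cdots a_n$ and let $j$ be the length of its trailing run of $0$'s, so $\alpha_i=\eta 0^j$ with $\eta=a_1\cdots a_{n-j}$ beginning and ending in a nonzero symbol; by definition of $\mathbf{R}_k(n)$ the rotation $0^j\eta$ is a necklace, so the longest $0$-run occurring (cyclically) in $\alpha_i$ has length exactly $j$. If $(\alpha_i,\alpha_{i+1})$ were not prefix-related, then $\text{pre}_{n-j-1}(\alpha_i)\ne\text{pre}_{n-j-1}(\alpha_{i+1})$; let $s\le n-j-1$ be the least index where they differ, and note $a_s>0$ since $\alpha_{i+1}<_{\text{lex}}\alpha_i$ forces $a_s>(\alpha_{i+1})_s\ge 0$. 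Now set $\gamma=a_1a_2\cdots a_s0^{n-s}$. One checks that $\gamma<_{\text{lex}}\alpha_i$ (they agree through position $s$, while $\gamma$ is $0$ at the nonzero position $n-j>s$ of $\alpha_i$); that $\gamma>_{\text{lex}}\alpha_{i+1}$ (they agree through position $s-1$ and $\gamma_s=a_s>(\alpha_{i+1})_s$); and that $\gamma\in\mathbf{R}_k(n)$. The last point is the crux: the trailing $0$-run of $\gamma$ is $n-s$, so I must show $\nu=0^{n-s}a_1\cdots a_s$ is a necklace, and this holds because any rotation of $\nu$ that is strictly smaller than $\nu$ would have to begin with a $0$-run of length at least $n-s$, whereas every $0$-run inside $\alpha_i$, and hence inside the block $a_1\cdots a_s$, has length at most $j<n-s$ (using $s\le n-j-1$). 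Thus $\gamma$ is an element of $\mathbf{R}_k(n)$ strictly between $\alpha_{i+1}$ and $\alpha_i$, contradicting their consecutiveness in $\text{revlex}(\mathbf{R}_k(n))$. Since $|\mathcal{U}|=|\bS|$ by construction, Corollary~\ref{corollaryOne} then gives that $\mathcal{U}$ is a universal cycle for $\bS$.

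The main obstacle is condition~3 and, inside it, the verification that $\gamma\in\mathbf{R}_k(n)$; the inequality $j<n-s$ is precisely what makes the necklace check for $\nu$ work. As an aside, one might hope to derive the theorem from Theorem~\ref{thm:colex} by reversing strings, since prefix-related pairs become suffix-related pairs, but $\text{rev}(\mathbf{R}_k(n))$ is not $\mathbf{Neck}_k(n)$ in general, so the direct argument above appears to be needed.
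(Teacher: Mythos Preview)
Your proposal is correct and follows essentially the same route as the paper: merge the last two cycles into $10^n$, invoke Corollary~\ref{corollaryOne} with $x=0$, and for condition~3 derive a contradiction via the intermediate string $\gamma=a_1\cdots a_s0^{n-s}$, arguing that $0^{n-s}a_1\cdots a_s$ is a necklace because $0^{n-s}$ is its longest $0$-run. Your write-up is in fact a bit more careful than the paper's in places---you explicitly verify $a_s\neq 0$, that $\gamma$ lies strictly between $\alpha_{i+1}$ and $\alpha_i$, and that the bound $j<n-s$ is what makes the necklace check go through---but the underlying argument is identical.
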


\vspace{-0.15in}
\end{boxed2}
\noindent
\begin{proof}
Observe that $\alpha_m=0^n$ and $\alpha_{m-1}=10^{n-1}$, so the last symbol in the sequence $\mathcal{U}$ is $x=0$. Note that $\alpha_{m-1}'=pr(\alpha_{m-1})pr(\alpha_m)=10^n$ is a universal cycle for $\bS_{m-1} =\factor_n(\alpha_{m-1})\cup \factor_n(\alpha_m)$. Let $\bS_i = \factor_n(\alpha_{i})$ and $\alpha_i' = pr(\alpha_i)$ for $1\leq i< m-2$. Then $\bS_1,\bS_2,\ldots, \bS_{m-1}$ is a UC-partition of $\bS$ with universal cycles $\alpha_1',\alpha_2',\ldots, \alpha_{m-1}'$ where $x=0$. To prove that $\mathcal{U}$
is a universal cycle for $\bS$  we show that the three conditions of Corollary~\ref{corollaryOne} hold.
\begin{enumerate}
\item Clearly $|\alpha_{m-1}'|\geq n$.
\item $\alpha_{m-1}'$ has suffix $x^n$, which must be maximal since all of the universal cycles are disjoint.
\item We must show that consecutive strings in $\text{ext}_n(\alpha_1'),\text{ext}_n(\alpha_2'),\ldots, \text{ext}_n(\alpha_{m-1}')$ are prefix-related with respect to $(x,n)$. Notice that $\alpha_{m-1}$ is a length $n$ prefix of $\text{ext}_n(\alpha_{m-1}')$ and $\text{ext}_n(\alpha_{i}') = \alpha_{i}$ for $1\leq i<m-1$. So we only need to show that $(\alpha_i,\alpha_{i+1})$ are prefix-related with respect to $(x,n)$ for $1\leq i < m-1$. The proof is by contradiction. Let $\alpha_i=a_1a_2\cdots a_n$ and $\alpha_{i+1} = b_1b_2\cdots b_n$ for some $1\leq i < m-1$. Let $j$ be the smallest index of $\alpha_i$ such that $a_{n-j}\neq x$. Suppose  $a_1a_2 \cdots a_{n-j-1} \neq b_1b_2 \cdots b_{n-j-1}$. Then there exists some smallest $s<n-j$ such that $a_{s}\neq b_{s}$. Since $\alpha_i$ comes before $\alpha_{i+1}$ in reverse lex order, then $a_{s}>b_{s}$. Let $\gamma=a_1a_2\cdots a_{s}0^{n-s}$. Clearly $\gamma$ is between $\alpha_i$ and $\alpha_{i+1}$ in reverse lex order, and $0^{n-s}a_1a_2\cdots a_{s}$ is a necklace since $0^{n-s}$ is the largest run of zeroes within the string, a contradiction. So $a_1a_2\cdots a_{n-j-1} = b_1b_2\cdots b_{n-j-1}$, which implies $\text{pre}_{n - j -1}(\alpha_i) = \text{pre}_{n - j -1}(\alpha_{i+1})$. Thus the pair of strings $(\alpha_i,\alpha_{i+1})$ are prefix-related with respect to $(x,n)$.
\end{enumerate}

\vspace{-0.2in}

\end{proof}
\begin{corollary}\label{corollary:neckRev}
For $n\geq 2$, $\text{UC}(\text{revlex}(\mathbf{R}_k(n)))$ is a de Bruijn sequence for $\Sigma_k^n$.
\end{corollary}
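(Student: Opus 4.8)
The plan is to derive Corollary~\ref{corollary:neckRev} as the special case $m = |\mathbf{R}_k(n)|$ of Theorem~\ref{thm:neckRev}, mirroring how Corollary~\ref{corollary:lex} and Corollary~\ref{corollary:colex} follow from their respective theorems. For $n \geq 2$ we have $m = |\mathbf{R}_k(n)| \geq 2$ (for instance $0^n$ and $10^{n-1}$ both lie in $\mathbf{R}_k(n)$), so the last $m$ strings of $\text{revlex}(\mathbf{R}_k(n))$ are exactly all of $\mathbf{R}_k(n)$, and Theorem~\ref{thm:neckRev} then says $\text{UC}(\text{revlex}(\mathbf{R}_k(n)))$ is a universal cycle for $\bS = \bigcup_{\alpha \in \mathbf{R}_k(n)} \factor_n(\alpha)$. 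Since a universal cycle for $\Sigma_k^n$ is by definition a de Bruijn sequence, it remains only to prove $\bS = \Sigma_k^n$.

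The inclusion $\bS \subseteq \Sigma_k^n$ is immediate, so the real work is $\Sigma_k^n \subseteq \bS$, and the key point is that $\mathbf{R}_k(n)$ meets every rotation class of $\Sigma_k^n$. I would argue this directly from the definition of $\mathbf{R}_k(n)$: fix $\nu \in \mathbf{Neck}_k(n)$; the case $\nu = 0^n$ is immediate since $0^n \in \mathbf{R}_k(n)$, so assume $\nu \neq 0^n$ and write $\nu = 0^r \nu'$, where $0^r$ is the (possibly empty) leading run of $0$'s of $\nu$. Using the easy fact that a necklace other than $0^n$ ends in a nonzero symbol, $\nu'$ begins and ends with a nonzero symbol, so in the string $\alpha = \nu' 0^r$ the largest index of a nonzero symbol is $|\nu'|$, and the rotation $a_{|\nu'|+1} \cdots a_n a_1 \cdots a_{|\nu'|}$ used in the definition of $\mathbf{R}_k(n)$ equals $0^r \nu' = \nu \in \mathbf{Neck}_k(n)$; hence $\alpha \in \mathbf{R}_k(n)$. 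Since $\factor_n(\beta)$ depends only on the rotation class of $\beta$, we get $\factor_n(\nu) = \factor_n(\alpha) \subseteq \bS$. As recorded in Section~\ref{subsection:one}, $\{\factor_n(\nu) : \nu \in \mathbf{Neck}_k(n)\}$ is a partition of $\Sigma_k^n$, so $\Sigma_k^n = \bigcup_{\nu \in \mathbf{Neck}_k(n)} \factor_n(\nu) \subseteq \bS$, giving $\bS = \Sigma_k^n$ and finishing the proof.

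I expect the only nontrivial ingredient to be this structural claim that $\mathbf{R}_k(n)$ is a complete system of rotation-class representatives; what the argument actually uses is only surjectivity --- every necklace is a rotation of some element of $\mathbf{R}_k(n)$ --- which the construction $\nu \mapsto \nu' 0^r$ supplies. Irredundancy (distinct elements of $\mathbf{R}_k(n)$ lie in distinct rotation classes) is not needed for the statement, though it is reassuring to note that it holds automatically once $\text{UC}(\text{revlex}(\mathbf{R}_k(n)))$ is a universal cycle for $\bS$: comparing lengths gives $|\bS| = \sum_{\alpha \in \mathbf{R}_k(n)} |pr(\alpha)| = \sum_{\alpha \in \mathbf{R}_k(n)} |\factor_n(\alpha)|$, which forces the sets $\factor_n(\alpha)$ to be pairwise disjoint. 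Everything else is a direct appeal to Theorem~\ref{thm:neckRev} and to the already-cited partition property of necklace factor-sets.
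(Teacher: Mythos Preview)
Your proposal is correct and matches the paper's approach: the paper states Corollary~\ref{corollary:neckRev} immediately after Theorem~\ref{thm:neckRev} with no separate proof, treating it as the special case $m=|\mathbf{R}_k(n)|$. You go further than the paper by explicitly verifying that $\bigcup_{\alpha\in\mathbf{R}_k(n)}\factor_n(\alpha)=\Sigma_k^n$ via the map $\nu=0^r\nu'\mapsto \nu'0^r$, which the paper leaves implicit (relying on the stated partition property of necklaces and the evident bijection between $\mathbf{Neck}_k(n)$ and $\mathbf{R}_k(n)$); this extra care is appropriate and the argument is sound.
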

\end{sloppypar}

The concatenation scheme described in the above corollary was originally motivated by considering the successor-rule based construction in~\cite{kwong}.    Although we do not prove it here,
the two constructions produce the same de Bruijn sequences when $k=2$, but produce different sequences for $k>2$.

\begin{exam} \small Consider the set $\mathbf{R}_2(6)$ listed in reverse lex order:

\vspace{-0.25in}

\begin{align}
&111111,111110,111100,111000,110110,110100,110000,101110,101100,101010,101000,100100,100000,000000.\nonumber
\end{align}

\vspace{-0.10in}

\noindent
By Corollary~\ref{corollary:neckRev}, the following is a de Bruijn sequence:

\vspace{-0.25in}

\[1 \cdot 111110\cdot 111100\cdot 111000\cdot 110\cdot 110100\cdot 110000\cdot 101110\cdot 101100\cdot 10\cdot 101000\cdot 100\cdot 100000\cdot 0.\]

\vspace{-0.10in}

\noindent
By Theorem~\ref{thm:neckRev}, $ 100\cdot 100000\cdot 0$,
is a universal cycle for $\factor_6(100) \cup \factor_6(100000)\cup \factor_6(0)$.
\end{exam}

\subsection{Co-necklaces}\label{subsection:two}

For this subsection we will be working over the binary alphabet $\Sigma_2=\{0,1\}$.  Let $\alpha$ be a binary string and let $\overline{\alpha}$ denote its bitwise complement.  We say that $\alpha$ is a \emph{co-necklace} if $\alpha\overline{\alpha}$ is a necklace.  The set of all co-necklaces of length 5 is $\{00000, 00010, 00100, 010101\}$.   If $\alpha$ is a co-necklace, then we call $\alpha\overline{\alpha}$ an \emph{extended co-necklace}. Let $\mathbf{coN}(n)$ denote the set of all extended co-necklaces for  co-necklaces of length $n$. For example, $\mathbf{coN}(5)=\{0000011111,0001011101, 0010011011,0101010101\}$. It is well known that $\{ \factor_n(\alpha):\alpha \in \mathbf{coN}(n)\}$ is a partition of $\Sigma_2^n$; they correspond to the partition obtained from the complemented cycling register~\cite{golomb}.
Using extended co-necklaces, we apply Theorem~\ref{secondTheorem} and Corollary~\ref{corollaryOne} to construct new universal cycles for subsets of $\Sigma_2^n$ and ultimately to produce two new binary de Bruijn sequence constructions.   Unlike necklaces, we cannot simply use lex or colex orderings.  For example,  neither 
\[ \text{UC}(\text{lex}(\mathbf{coN}(5))) = 0000011111 \cdot 0001011101 \cdot 0010011011 \cdot 01,  \text{ or} \]
\[ \text{UC}(\text{colex}(\mathbf{coN}(5))) = 01 \cdot 0001011101   \cdot 0010011011  \cdot    0000011111 \]
 are de Bruijn sequences, since neither contains the substring 10101.

\subsubsection{Extended Co-necklaces in Reverse Colex Order}

The following lemma will be useful in the proof of our universal cycle construction using the reverse colex order of extended co-necklaces.

\begin{lemma}\label{lemma:one}
Let $\alpha =a_1a_2\cdots a_{2n}$ and $\alpha'=b_1b_2\cdots b_{2n}$ be consecutive strings in the listing $\text{revcolex}(\mathbf{coN}(n))$. Then $\text{pre}_n(\alpha)$ comes before $\text{pre}_n(\alpha')$ in colex order.
\end{lemma}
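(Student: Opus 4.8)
The plan is to unwind the definitions of \emph{co-necklace} and \emph{reverse colex order}, and to reduce the claim about the length-$2n$ extended co-necklaces to a claim purely about their length-$n$ prefixes (which are exactly the co-necklaces themselves, since an extended co-necklace has the form $\alpha\overline{\alpha}$). First I would note that the map $\alpha \mapsto \alpha\overline{\alpha}$ is a bijection from co-necklaces of length $n$ to $\mathbf{coN}(n)$, and observe that it is \emph{order-reflecting} in the relevant sense: I claim $\alpha\overline{\alpha}$ comes before $\alpha'\overline{\alpha'}$ in revcolex order if and only if $\alpha$ comes before $\alpha'$ in colex order. The point is that colex order compares strings from the right, and in $\alpha\overline{\alpha}$ the rightmost $n$ symbols are $\overline{\alpha}$, which is monotone-decreasing in $\alpha$ bit-by-bit (flipping a $0$ to a $1$ in $\alpha$ flips a $1$ to a $0$ in $\overline{\alpha}$). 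So a careful case analysis on the largest index where two extended co-necklaces differ — whether it falls in the $\overline{\alpha}$ part or the $\alpha$ part — shows that the revcolex order on $\mathbf{coN}(n)$, restricted to comparing via the suffix $\overline{\alpha}$, matches the colex order on the prefixes $\alpha$; and when the $\overline{\alpha}$ parts agree the two strings are equal anyway. Thus "consecutive in $\text{revcolex}(\mathbf{coN}(n))$" translates directly to "consecutive in $\text{colex}$ of the co-necklaces", i.e. $\text{pre}_n(\alpha) = $ the co-necklace underlying $\alpha$ comes immediately before $\text{pre}_n(\alpha')$ in colex order, which in particular gives the (weaker) asserted statement that it comes before.

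The cleanest way to carry this out: let $\sigma = \text{pre}_n(\alpha)$ and $\tau = \text{pre}_n(\alpha')$, so $\alpha = \sigma\overline{\sigma}$ and $\alpha' = \tau\overline{\tau}$, with $\sigma \neq \tau$ (distinct extended co-necklaces have distinct co-necklace parts). Write $\sigma = s_1\cdots s_n$, $\tau = t_1\cdots t_n$, and let $\ell$ be the largest index in $[1,n]$ with $s_\ell \neq t_\ell$. Then the largest index in $[1,2n]$ where $\alpha$ and $\alpha'$ differ is $n+\ell$ (because positions $n+1,\dots,n+\ell$ of $\alpha$ and $\alpha'$ are $\overline{s_1\cdots s_\ell}$-prefixed appropriately and they differ precisely at $n+\ell$; positions beyond $n+\ell$ agree since $s_{\ell+1}\cdots s_n = t_{\ell+1}\cdots t_n$ forces $\overline{s}$ and $\overline{t}$ to agree there). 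Since $\alpha$ precedes $\alpha'$ in revcolex order, by definition $\alpha$ comes after $\alpha'$ in colex, so the symbol of $\alpha$ at position $n+\ell$ exceeds that of $\alpha'$, i.e. $\overline{s_\ell} > \overline{t_\ell}$, i.e. $s_\ell < t_\ell$. That is exactly the statement that $\sigma$ precedes $\tau$ in colex order. One should also handle the degenerate possibility that one of $\sigma,\tau$ is a proper suffix of the other, but since both have length exactly $n$ this cannot occur, so the "proper suffix" clause of the colex definition is vacuous here.

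The main obstacle — really the only place care is needed — is the bookkeeping that translates the comparison at position $n+\ell$ of the length-$2n$ strings back to position $\ell$ of the length-$n$ co-necklaces, together with making sure the bit-complement correctly reverses the inequality. A subtle secondary point is verifying that the underlying co-necklace part is genuinely determined by the extended co-necklace (so that $\sigma \neq \tau$), and that no position strictly right of $n+\ell$ can be a point of difference; both follow immediately from $s_{\ell+1}\cdots s_n = t_{\ell+1}\cdots t_n$. I do not anticipate needing any property of co-necklaces beyond the definition $\alpha$ is a co-necklace iff $\alpha\overline{\alpha}$ is a necklace and the fact that $\{\factor_n(\alpha\overline{\alpha}) : \alpha \text{ a co-necklace of length } n\}$ partitions $\Sigma_2^n$ (used only to know the $\alpha_i$ in later theorems are genuine universal cycles); the lemma itself is a pure order-theory statement about the revcolex listing.
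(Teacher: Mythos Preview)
Your proposal is correct and follows essentially the same argument as the paper: write $\alpha=\sigma\overline{\sigma}$, $\alpha'=\tau\overline{\tau}$, locate the largest index where the length-$2n$ strings differ (necessarily in the complemented half, at position $n+\ell$ where $\ell$ is the largest index with $s_\ell\neq t_\ell$), use the revcolex hypothesis to get $\overline{s_\ell}>\overline{t_\ell}$, and uncomplement to obtain $s_\ell<t_\ell$. The paper's proof runs the same computation starting from the $2n$-length index and translating back to $j=i-n$, but the content is identical.
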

\begin{proof}
Since $\alpha,\alpha'\in \mathbf{coN}(n)$,  $\alpha=\beta\overline{\beta}$ and $\alpha'=\beta'\overline{\beta'}$ for some $\beta,\beta'\in \Sigma_2^n$. Because $\alpha$ comes before $\alpha'$ in reverse colex order and $\overline{\beta}\neq \overline{\beta'}$, there exists a largest index $n+1 \leq i\leq 2n$ where $a_i\neq b_i$ and $a_i > b_i$. This implies that the largest index $1\leq j\leq n$ such that $a_j\neq b_j$ is $j=i-n$. This means that $a_j = \overline{a_i}$ and $b_j=\overline{b_i}$ and thus $a_j < b_j$. Therefore $a_1a_2\cdots a_n = \beta = \text{pre}_n(\alpha)$ comes before $b_1b_2\cdots b_n = \beta' =\text{pre}_n(\alpha')$ in colex order.
\end{proof}

\noindent
\begin{boxed2}
\vspace{-0.15in}
\begin{theorem}\label{thm:coNeckColex}
For $n \geq 2$ and $m\geq 1$, let $\alpha_1,\alpha_2,\ldots, \alpha_m$ be the first $m$ strings in $\text{revcolex}(\mathbf{coN}(n))$.  Then $\mathcal{U} = \text{UC}(\alpha_1,\alpha_2,\ldots, \alpha_m)$ is a universal cycle for $\bS  = \bigcup_{i=1}^m \factor_n(\alpha_i)$.
\end{theorem}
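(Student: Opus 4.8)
The strategy mirrors the proofs of Theorems~\ref{thm:lex}, \ref{thm:colex}, and \ref{thm:neckRev}: set up a UC-partition from the first $m$ extended co-necklaces, observe that the first one (or the first two) can be merged into a single universal cycle starting with a maximal run of $0$'s, and then verify the three hypotheses of Theorem~\ref{secondTheorem}. Concretely, the first string in $\text{revcolex}(\mathbf{coN}(n))$ is $\alpha_1 = 0^n1^n$ (the extended co-necklace of the all-zero co-necklace), since among all extended co-necklaces $\beta\overline{\beta}$ the prefix $\beta$ is colex-maximal when $\beta = 1^n$... wait, I need to re-examine: by Lemma~\ref{lemma:one} the first string in revcolex order has its length-$n$ prefix last in colex order among all co-necklaces, and the co-necklaces of length $n$ include $0^n$ and... the colex-largest co-necklace. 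I would first pin down which extended co-necklace is $\alpha_1$ and whether it has a long run of $0$'s at the front; the natural candidate playing the role of the ``base'' universal cycle with maximal prefix run $x = 0$ is $0^n 1^n$, and I expect $\alpha_1 = 0^n1^n$ here (analogous to $\alpha_1 = 0^n$, $\alpha_2 = 0^{n-1}1$ in Theorem~\ref{thm:colex}). Set $x = 0$, let $\bS_i = \factor_n(\alpha_i)$ with universal cycle $\alpha_i' = pr(\alpha_i)$; then $\bS_1,\ldots,\bS_m$ is a UC-partition of $\bS$.

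The three conditions of Theorem~\ref{secondTheorem} are then checked as follows. Condition~1, $|\alpha_1'| \ge n$: this holds since $pr(0^n1^n) = 0^n1^n$ has length $2n \ge n$ (the only exception to worry about is $n$ small or the all-$0$ co-necklace degenerating, but $0^n1^n$ is aperiodic for $n \ge 1$, so $pr(\alpha_1) = \alpha_1$). Condition~2, $\alpha_1'$ has a maximal prefix of $0$'s among all $\alpha_i'$: since the $\factor_n(\alpha_i)$ partition $\Sigma_2^n$ and only $\alpha_1$ can contain the all-$0$ string $0^n$ as a cyclic factor, no other $pr(\alpha_i)$ can have a prefix of $n$ consecutive $0$'s (and the prefix of $\alpha_1'$ is exactly $0^n$). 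Condition~3 is the heart of the matter: for each $1 \le i < m$, $(\text{ext}_n(\alpha_i'), \text{ext}_n(\alpha_{i+1}'))$ must be suffix-related with respect to $(0,n)$. Here is where Lemma~\ref{lemma:one} does the work. Since $\alpha_i = \beta_i\overline{\beta_i}$ with $|\beta_i| = n$, we have $\text{ext}_n(\alpha_i') = \alpha_i$ whenever $\alpha_i$ is aperiodic of length $\ge n$ (true except possibly for the periodic case $\alpha_i = (01)^n$ — i.e. $\beta_i = (01)^{n/2}$ when $n$ is even — which, being the colex-largest co-necklace, is last in revcolex and so only matters as $\alpha_m$, not in any pair $(\alpha_i,\alpha_{i+1})$ with $i < m$, unless $m$ is such that it appears; I would handle this boundary case separately). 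So reduce to showing $(\alpha_i, \alpha_{i+1})$ is suffix-related w.r.t.\ $(0,n)$: letting $j$ be the smallest index with $(\alpha_{i+1})_j \ne 0$, we need $\text{suff}_{2n-j}(\alpha_i) = \text{suff}_{2n-j}(\alpha_{i+1})$, i.e.\ the two extended co-necklaces agree on their last $2n - j$ positions.

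The argument for condition~3 I expect to run by contradiction in the style of Theorem~\ref{thm:colex}: suppose $\text{suff}_{2n-j}(\alpha_i) \ne \text{suff}_{2n-j}(\alpha_{i+1})$, take the largest index $s > j$ where they differ; since $\alpha_i$ precedes $\alpha_{i+1}$ in revcolex order, $(\alpha_i)_s > (\alpha_{i+1})_s$, and since $j$ is the first nonzero position of $\alpha_{i+1}$, we have $(\alpha_{i+1})_j = 1$, so $\alpha_{i+1}$ has prefix $0^{j-1}1$, forcing $j \le n$ (as $\overline{\beta_{i+1}}$ cannot be all zero). I would then exhibit an extended co-necklace strictly between $\alpha_i$ and $\alpha_{i+1}$ in revcolex order — the natural candidate is the extended co-necklace whose length-$n$ prefix is obtained by truncating $\beta_i$ (or $\beta_{i+1}$) appropriately and padding, mirroring the ``$\gamma = 0^{s-1}b_s \cdots b_n$'' trick — contradicting that $\alpha_i, \alpha_{i+1}$ are consecutive. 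The cleanest route is probably to push everything through Lemma~\ref{lemma:one}: it tells us $\text{pre}_n(\alpha_i)$ precedes $\text{pre}_n(\alpha_{i+1})$ in colex order among co-necklaces, and suffix-relatedness of the extended strings should translate into a statement about the $\beta_i$'s that we can attack with the same ``insert an intermediate necklace'' contradiction used for ordinary necklaces in colex order. \textbf{The main obstacle} I anticipate is precisely this translation step — verifying that the complement-doubling $\beta \mapsto \beta\overline{\beta}$ interacts correctly with ``suffix-related w.r.t.\ $(0,n)$'' and with the extension operator $\text{ext}_n$, and correctly dispatching the periodic co-necklace $(01)^n$ as well as the small-$m$ edge cases (e.g.\ $m = 1$, where the claim is just that $0^n1^n$ is a universal cycle for $\factor_n(0^n1^n)$, which is immediate). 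Once that bookkeeping is in place, conditions~1 and~2 are routine and condition~3 is a direct adaptation of the colex-necklace argument.
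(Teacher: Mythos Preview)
Your overall plan matches the paper's: take $x=0$, identify $\alpha_1 = 0^n1^n$, form the UC-partition $\bS_i = \factor_n(\alpha_i)$ with $\alpha_i' = pr(\alpha_i)$, and verify the three hypotheses of Theorem~\ref{secondTheorem}. Conditions~1 and~2 are disposed of exactly as you say.

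There is, however, a concrete error in how you unpack condition~3. You write that for $(\alpha_i,\alpha_{i+1})$ to be suffix-related with respect to $(0,n)$ one needs $\text{suff}_{2n-j}(\alpha_i)=\text{suff}_{2n-j}(\alpha_{i+1})$. That is not the definition: suffix-related with respect to $(0,n)$ only requires $\text{suff}_{n-j}(\alpha)=\text{suff}_{n-j}(\beta)$, regardless of the lengths of $\alpha$ and $\beta$. The stronger $\text{suff}_{2n-j}$ statement you propose to prove is in fact \emph{false}. For $n=5$ the first two strings in $\text{revcolex}(\mathbf{coN}(5))$ are $\alpha_1=0000011111$ and $\alpha_2=0010011011$; here $j=3$, and $\text{suff}_{7}(\alpha_1)=0011111 \neq 0011011=\text{suff}_{7}(\alpha_2)$, while $\text{suff}_{2}(\alpha_1)=11=\text{suff}_{2}(\alpha_2)$ as actually required. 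So the ``direct'' route you sketch would not go through.

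Your alternative ``cleanest route'' is the right one and is precisely what the paper does. Since $\text{ext}_n(\alpha_i')$ always has $\text{pre}_n(\alpha_i)=\beta_i$ as its length-$n$ prefix, the index $j$ is read off from $\beta_{i+1}$; and because the length-$(n-j)$ suffix of $\text{ext}_n(\alpha_i')$ is the complement of the length-$(n-j)$ suffix of $\beta_i$ (this holds in both the aperiodic and the periodic cases, since $\text{ext}_n(\alpha_i')$ is a prefix of $\beta_i\overline{\beta_i}$ of length between $n$ and $2n$), it suffices to show that $(\beta_i,\beta_{i+1})=(\text{pre}_n(\alpha_i),\text{pre}_n(\alpha_{i+1}))$ are suffix-related with respect to $(0,n)$. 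Lemma~\ref{lemma:one} gives that $\beta_i$ precedes $\beta_{i+1}$ in colex order, and the contradiction argument is then a verbatim replay of the one in Theorem~\ref{thm:colex}: if the length-$(n-j)$ suffixes differed at some largest index $s>j$, then $\gamma=0^{s-1}b_s\cdots b_n$ would be a co-necklace lying strictly between $\beta_i$ and $\beta_{i+1}$ in colex order. This also absorbs the periodic case $(01)^n$ that you flagged --- working with the length-$n$ prefixes $\beta_i$ throughout makes the periodicity of $\alpha_i$ irrelevant, so no separate boundary analysis is needed.
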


\vspace{-0.15in}
\end{boxed2}
\noindent
\begin{proof}
Observe that $\alpha_1=0^n1^n$, so the first symbol in the sequence $\mathcal{U}$ is $x=0$.
Let $\bS_i = \factor_n(\alpha_{i})$ and $\alpha_i'=pr(\alpha_i)$ for $1 \leq i\leq m$. Then $\bS_1,\bS_2,\ldots, \bS_{m}$ is a UC-partition of $\bS$ with universal cycles $\alpha_1',\alpha_2',\ldots, \alpha_m'$ where $x=0$. To prove that $\mathcal{U}$
is a universal cycle for $\bS$  we show that the three conditions of Theorem~\ref{secondTheorem} hold.
\begin{enumerate}
\item Clearly $|\alpha_1'|\geq n$.
\item $\alpha_1'$ has prefix $x^n$, which must be maximal since all of the universal cycles are disjoint.
\item We must show that consecutive strings in $\text{ext}_n(\alpha_1'),\text{ext}_n(\alpha_2'),\ldots, \text{ext}_n(\alpha_{m}')$ are suffix-related with respect to $(x,n)$. Notice that $\text{pre}_n(\text{ext}_n(\alpha_i')) = \text{pre}_n(\alpha_i)$ for $1\leq i \leq m$. So we only need to show that $(\text{pre}_n(\alpha_i),\text{pre}_n(\alpha_{i+1}))$ are suffix-related with respect to $(x,n)$ for $1\leq i < m$. The proof is by contradiction. Let $\text{pre}_n(\alpha_i) = a_1a_2\cdots a_n$ and $\text{pre}_n(\alpha_{i+1}) = b_1b_2\cdots b_n$ for some $1\leq i < m$. Let $j$ be the smallest index of $\alpha_{i+1}$ such that $b_j\neq x$. Suppose  $a_{j+1}a_{j+2} \cdots a_n \neq b_{j+1}b_{j+2}\cdots b_n$. Then there exists some largest $i> j$ such that $a_i \ne b_i$.  Since $\text{pre}_n(\alpha_i)$ comes before $\text{pre}_n(\alpha_{i+1})$ in colex order by Lemma~\ref{lemma:one}, $a_i<b_i$. However, since $\text{pre}_n(\alpha_{i+1})$ is a co-necklace, then $\gamma = 0^{i-1} b_i b_{i+1}\cdots b_n$ will also be a co-necklace. But this means $\gamma$ comes between $\text{pre}_n(\alpha_i)$ and $\text{pre}_n(\alpha_{i+1})$ in colex order, which is a contradiction.  So $a_{j+1}a_{j+2} \cdots a_n = b_{j+1}b_{j+2}\cdots b_n$,  which implies $\text{suff}_{n-j}(\text{pre}_n(\alpha_i))=\text{suff}_{n-j}(\text{pre}_n(\alpha_{i+1}))$. Thus the pair of strings $(\text{pre}_n(\alpha_i),\text{pre}_n(\alpha_{i+1}))$ are suffix-related with respect to $(x,n)$, which implies $(\alpha_i,\alpha_{i+1})$ are suffix-related with respect to $(x,n)$.
\end{enumerate}

\vspace{-0.2in}
%
\end{proof}

When $m=|\mathbf{C}(n)|$, the above theorem yields the following corollary which describes a construction equivalent to the de Bruijn sequence construction from~\cite{Gabric2017}.  In that paper an algorithm
is provided that generates the de Bruijn sequence in $O(1)$-amortized time per bit.

\begin{corollary}\label{corollary:coNeckColex}
For $n \geq 2$, $\text{UC}(\text{revcolex}(\mathbf{coN}(n)))$ is a de Bruijn sequence for $\Sigma_2^n$.
\end{corollary}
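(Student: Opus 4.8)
The plan is to derive Corollary~\ref{corollary:coNeckColex} as the special case $m = |\mathbf{coN}(n)|$ of Theorem~\ref{thm:coNeckColex}, so the work is to check that when the \emph{entire} listing $\text{revcolex}(\mathbf{coN}(n))$ is used, the resulting universal cycle for $\bS = \bigcup_i \factor_n(\alpha_i)$ is actually a universal cycle for all of $\Sigma_2^n$. First I would invoke the stated fact that $\{\factor_n(\alpha) : \alpha \in \mathbf{coN}(n)\}$ is a partition of $\Sigma_2^n$ — this is the partition coming from the complemented cycling register~\cite{golomb}. Hence taking $m$ to be the total number of extended co-necklaces makes $\bS = \bigcup_{\alpha \in \mathbf{coN}(n)} \factor_n(\alpha) = \Sigma_2^n$. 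Theorem~\ref{thm:coNeckColex} then immediately gives that $\mathcal{U} = \text{UC}(\text{revcolex}(\mathbf{coN}(n)))$ is a universal cycle for $\Sigma_2^n$, and a universal cycle for $\Sigma_2^n$ is by definition a de Bruijn sequence for $\Sigma_2^n$.

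The only genuine content to verify is that the hypotheses ``$n \geq 2$ and $m \geq 1$'' of Theorem~\ref{thm:coNeckColex} are met when $m = |\mathbf{coN}(n)|$; since $0^n1^n$ is always an extended co-necklace, $|\mathbf{coN}(n)| \geq 1$, so the theorem applies verbatim with $n \geq 2$. Thus the proof is essentially one line: ``Take $m = |\mathbf{coN}(n)|$ in Theorem~\ref{thm:coNeckColex} and use that $\{\factor_n(\alpha):\alpha \in \mathbf{coN}(n)\}$ partitions $\Sigma_2^n$.'' I would also briefly note that the number of symbols produced is correct, i.e. $|\mathcal{U}| = \sum_{\alpha} |pr(\alpha)| = \sum_{\alpha} |\factor_n(\alpha)| = |\Sigma_2^n| = 2^n$, which is automatic from the universal-cycle property but worth stating for a reader who wants to see that $\mathcal{U}$ has the expected length $2^n$.

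There is essentially no obstacle here — the corollary is a direct specialization. If I wanted to be careful about one subtle point, it would be confirming that ``$\mathbf{C}(n)$'' in the surrounding text is a typo for $\mathbf{coN}(n)$ (the theorem statement and the partition fact both use $\mathbf{coN}(n)$), so that the condition $m = |\mathbf{coN}(n)|$ is the one that forces $\bS = \Sigma_2^n$. Beyond that, the proof is simply: the full revcolex listing of extended co-necklaces satisfies the hypotheses of Theorem~\ref{thm:coNeckColex}, its output is a universal cycle for the union of the associated factor sets, that union is all of $\Sigma_2^n$, and a universal cycle for $\Sigma_2^n$ is a de Bruijn sequence. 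I would present this as a short \emph{Proof} paragraph rather than a structured argument, since no new case analysis or induction is needed — everything has already been done in Theorem~\ref{thm:coNeckColex} and in the cited structural fact about the complemented cycling register.
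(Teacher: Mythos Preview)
Your proposal is correct and matches the paper's approach exactly: the paper derives the corollary in one sentence by taking $m$ equal to the total number of extended co-necklaces in Theorem~\ref{thm:coNeckColex} and using that $\{\factor_n(\alpha):\alpha\in\mathbf{coN}(n)\}$ partitions $\Sigma_2^n$. You also correctly spotted that the paper's ``$m=|\mathbf{C}(n)|$'' in the sentence introducing the corollary is a typo for $|\mathbf{coN}(n)|$.
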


\begin{exam} \small Consider the set $\mathbf{coN}(6)$ listed in reverse colex order:

\vspace{-0.25in}

\begin{align}
&000000111111, 000100111011, 001100110011, 000010111101, 001010110101, 000110111001.\nonumber
\end{align}

\vspace{-0.10in}

\noindent
By Corollary~\ref{corollary:coNeckColex}, the following is a de Bruijn sequence:

\vspace{-0.15in}

\[000000111111\cdot 000100111011\cdot 0011 \cdot 000010111101 \cdot 001010110101\cdot 000110111001.\]

\vspace{-0.05in}

\noindent
By Theorem~\ref{thm:coNeckColex}, $ 000000111111\cdot 000100111011\cdot 0011$,
is a universal cycle for $\factor_6(000000111111) \cup \factor_6(000100111011)\cup \factor_6(0011)$.
\end{exam}
\subsubsection{Rotations of Extended Co-necklaces in Lex Order}

Let $\mathbf{C}(n)$ be the set of all $\alpha = a_1a_2\cdots a_{2n}$ such that $a_{i+1}a_{i+2}\cdots a_{2n} a_1\cdots a_{i}\in \mathbf{coN}(n)$, where $i$ is the largest index of $\alpha$ such that $a_i\neq 0$. For example, $\mathbf{C}(5) =\{1111100000,1011101000,1001101100,1010101010\}$.

\begin{lemma}\label{lemma:two}
Let $\alpha=a_1a_2\cdots a_{2n}$ and $\beta=b_1b_2\cdots b_{2n}$ be in  $\mathbf{C}(n)$. If $(\text{pre}_n(\alpha),\text{pre}_n(\beta))$ are prefix-related with respect to $(x,n)$ where $x \in \{0,1\}$, then $(\alpha,\beta)$ are prefix-related with respect to $(1-x,n)$.
\end{lemma}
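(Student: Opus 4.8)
\textbf{Proof proposal for Lemma~\ref{lemma:two}.}

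The plan is to trace the definitions of prefix-related and of the set $\mathbf{C}(n)$ through the bitwise complement, exploiting the fact that every $\alpha \in \mathbf{C}(n)$ is a rotation of an extended co-necklace $\beta\overline{\beta}$, hence enjoys a ``$\overline{\text{suff}}_n(\alpha) = \text{pre}_n(\alpha)$''-type symmetry. First I would fix $\alpha = a_1 a_2 \cdots a_{2n}$ and $\beta = b_1 b_2 \cdots b_{2n}$ in $\mathbf{C}(n)$, write $\text{pre}_n(\alpha) = a_1 \cdots a_n$ and $\text{pre}_n(\beta) = b_1 \cdots b_n$, and unpack the hypothesis: if $j$ is the smallest index of $\text{pre}_n(\alpha)$ such that $a_{n-j} \neq x$ (reading the definition of prefix-related with $\alpha$ of length $n$ here), then $j \le n$ and $\text{pre}_{n-j-1}(\text{pre}_n(\alpha)) = \text{pre}_{n-j-1}(\text{pre}_n(\beta))$, i.e. $a_1\cdots a_{n-j-1} = b_1 \cdots b_{n-j-1}$. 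The goal is to show that with $y = 1-x$, the smallest index $j'$ of $\alpha$ (now as a length-$2n$ string) with $a_{2n - j'} \neq y$ satisfies $j' \le n$ and $\text{pre}_{n - j' - 1}(\alpha) = \text{pre}_{n-j'-1}(\beta)$.

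The key observation I would isolate as the first step is: for $\alpha \in \mathbf{C}(n)$, the length-$n$ suffix of $\alpha$ is the bitwise complement of a specific rotation of $\text{pre}_n(\alpha)$ — more precisely, since $\alpha$ is a rotation of $\gamma\overline{\gamma}$ for a co-necklace $\gamma$, and since $i$ (the position of the last $1$) lies in the first half exactly when $\gamma$ is nonzero, one gets a clean relation of the form $a_{n+t} = \overline{a_t}$ for all $t$ (when the rotation amount is a multiple... ) — here I need to be careful, because a general rotation of $\gamma\overline\gamma$ does \emph{not} satisfy $a_{n+t} = \overline{a_t}$. So the second step is to pin down which rotations actually occur in $\mathbf{C}(n)$: the definition forces $\alpha$ to be the rotation that places the run of trailing $0$'s at the end, and one should check that for that particular rotation the trailing $0$-run has length exactly $n$ minus (length of leading $0$-run of the associated co-necklace prefix), giving the identity $\overline{a_t} = a_{n+t}$ modulo a controlled shift. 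Whatever the exact bookkeeping, the upshot I want is: \emph{the last nonzero entry of $\alpha$ sits at index $\le n$, and the suffix structure of $\alpha$ beyond that is all $0$'s, while the prefix structure of $\alpha$ up to index $n$ mirrors (under complement) the run of trailing bits.} Concretely this should give $j' = $ (number of trailing bits of $\alpha$ equal to $y=1-x$) $=$ (number of bits at the matching front positions of $\text{pre}_n(\alpha)$ equal to $x$), which is exactly $j$, so $j' = j \le n$; and then $\text{pre}_{n-j'-1}(\alpha) = a_1 \cdots a_{n-j-1} = b_1\cdots b_{n-j-1} = \text{pre}_{n-j'-1}(\beta)$ directly from the hypothesis.

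I expect the main obstacle to be precisely this second step — verifying that the ``rotate so the $0$-run is at the end'' normalization in the definition of $\mathbf{C}(n)$ interacts correctly with the complement symmetry so that ``$x$-run at the front of $\text{pre}_n$'' corresponds bit-for-bit to ``$(1-x)$-run at the back of $\alpha$'', and in particular that the index shift lands exactly right (the off-by-one hidden in the $n-j-1$ versus $n-j$ in the prefix-related definition will need care). A clean way to handle it is to first prove the special case $x = 0$: then the hypothesis says $\text{pre}_n(\alpha)$ and $\text{pre}_n(\beta)$ agree up to just before the first nonzero bit, and since the first nonzero bit of $\text{pre}_n(\gamma\overline\gamma) = \gamma$ is the first $1$, complementing turns a leading $0$-run of $\gamma$ into a leading $1$-run of $\overline\gamma$, which under the $\mathbf{C}(n)$-rotation becomes a trailing $1 = (1-x)$-run of $\alpha$; the case $x=1$ is symmetric by the same argument with the roles of $\gamma, \overline\gamma$ swapped. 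Once the correspondence of run-lengths is nailed down, the equality of the remaining prefixes is immediate, so the lemma follows.
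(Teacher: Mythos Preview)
Your overall plan is right --- show that the index $j'$ for $\alpha$ equals the index $j$ for $\text{pre}_n(\alpha)$, then observe that the required prefix equality is literally the hypothesis --- and this is exactly what the paper does. But the ``main obstacle'' you flag is not real, and worrying about it sends you on an unnecessary detour.

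You write that ``a general rotation of $\gamma\overline{\gamma}$ does \emph{not} satisfy $a_{n+t}=\overline{a_t}$,'' and then propose to carefully analyze which specific rotation the definition of $\mathbf{C}(n)$ picks out. This is false: \emph{every} rotation of an extended co-necklace satisfies $a_{n+t}=\overline{a_t}$. Indeed, if $\delta=\gamma\overline{\gamma}=d_1\cdots d_{2n}$ then $d_{i+n}=\overline{d_i}$ holds for all $i$ taken mod $2n$ (check both halves), and rotating by $r$ gives $\alpha_t=d_{r+t}$, so $\alpha_{n+t}=d_{r+t+n}=\overline{d_{r+t}}=\overline{\alpha_t}$. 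Equivalently, $\alpha=\text{pre}_n(\alpha)\,\overline{\text{pre}_n(\alpha)}$ for every $\alpha\in\mathbf{C}(n)$, with no restriction on which rotation you took. Your examples in $\mathbf{C}(5)$ confirm this: $1011101000=10111\cdot\overline{10111}$, $1001101100=10011\cdot\overline{10011}$, etc.

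Once you have $a_{n+t}=\overline{a_t}$, the lemma is a two-line computation, exactly as the paper does it: $a_{2n-j'}\neq 1-x$ iff $\overline{a_{n-j'}}\neq 1-x$ iff $a_{n-j'}\neq x$, so the minimal such $j'$ equals $j$; and since $n-j-1\le n$, $\text{pre}_{n-j-1}(\alpha)=a_1\cdots a_{n-j-1}=b_1\cdots b_{n-j-1}=\text{pre}_{n-j-1}(\beta)$ is exactly the hypothesis. No case split on $x$, no bookkeeping about where the trailing $0$-run lands, no off-by-one worries beyond what is already in the definition. Drop the second step of your plan entirely and just prove the one-line identity $\alpha=\text{pre}_n(\alpha)\,\overline{\text{pre}_n(\alpha)}$ first.
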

\begin{proof}
Assume that the pair of strings $(\text{pre}_n(\alpha),\text{pre}_n(\beta))$ are prefix-related with respect to $(x,n)$. There exists a smallest index $1 \leq i \leq n$ such that $x\neq a_{n-i}$ and $\text{pre}_{n-i-1}(\text{pre}_n(\alpha))=\text{pre}_{n-i-1}(\text{pre}_n(\beta))$. Since $\alpha = \text{pre}_n(\alpha)\overline{\text{pre}_n(\alpha)}$ and $\beta =\text{pre}_n(\beta)\overline{\text{pre}_n(\beta)}$, we have that the smallest index $j$ such that $1-x \neq a_{2n-j}$ is $j=i$ and we already know from before than $\text{pre}_{n-i-1}(\alpha)=\text{pre}_{n-i-1}(\beta)$. So $(\alpha,\beta)$ are prefix-related with respect to $(1-x,n)$.
\end{proof}


\begin{boxed2}
\vspace{-0.15in}
\begin{theorem}\label{thm:coNecklex}
For $n \geq 2$ and $m\geq 1$, let $\alpha_1,\alpha_2,\ldots, \alpha_m$ be the last $m$ strings in $\text{lex}(\mathbf{C}(n))$.  Then $\mathcal{U} = \text{UC}(\alpha_1,\alpha_2,\ldots, \alpha_m)$ is a universal cycle for $\bS  = \bigcup_{i=1}^m \factor_n(\alpha_i)$.
\end{theorem}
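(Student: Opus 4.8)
The plan is to mirror the proof of Theorem~\ref{thm:coNeckColex}, but working with Corollary~\ref{corollaryOne} (the prefix-related, reverse-of-a-run version) in place of Theorem~\ref{secondTheorem}, and with Lemma~\ref{lemma:two} bridging between the prefix-relatedness of the length-$n$ prefixes and the prefix-relatedness of the full extended co-necklaces. First I would observe that the lexicographically largest element of $\mathbf{C}(n)$ is $\alpha_m = 1^n0^n$ (since the corresponding co-necklace prefix $1^n$ is the lex-largest co-necklace of length $n$, and its rotation lands it at the end of $\text{lex}(\mathbf{C}(n))$), and the second-largest is $\alpha_{m-1}=1^{n}0^{n-1}\cdots$ — more precisely I would identify $\alpha_{m-1}$ so that $pr(\alpha_{m-1})pr(\alpha_m)=pr(\alpha_{m-1})\,1^n0^n$ gives, after periodic reduction, a single string ending in a maximal run of $0$'s; this is the analogue of the $\alpha_{m-1}'=10^n$ step in Theorem~\ref{thm:neckRev}. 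Set $x = 0$, the last symbol of $\mathcal U$. Then $\bS_i = \factor_n(\alpha_i)$, $\alpha_i' = pr(\alpha_i)$ (with the last two merged as above) gives a UC-partition of $\bS$ with universal cycles $\alpha_1',\ldots,\alpha_{m-1}'$, and it remains to verify the three conditions of Corollary~\ref{corollaryOne}.

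Conditions~1 and~2 are immediate: $|\alpha_{m-1}'|\ge n$, and $\alpha_{m-1}'$ ends in $0^n$ which is a maximal run of $0$'s among all the $\alpha_i'$ because the sets $\factor_n(\alpha_i)$ are pairwise disjoint (only one of them can contain $0^n$). Condition~3 is the substance. As in Theorem~\ref{thm:coNeckColex} I would reduce it to showing that for consecutive $\alpha_i, \alpha_{i+1}$ in $\text{lex}(\mathbf{C}(n))$ the pair $(\alpha_i,\alpha_{i+1})$ is prefix-related with respect to $(0,n)$, noting that $\text{ext}_n(\alpha_i') = \alpha_i$ for the non-merged indices and handling the merged pair $\alpha_{m-1}'$ separately (its length-$n$ prefix is $\alpha_{m-1}$, just as in the earlier proofs). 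By Lemma~\ref{lemma:two}, $(\alpha_i,\alpha_{i+1})$ is prefix-related with respect to $(0,n)$ as soon as $(\text{pre}_n(\alpha_i),\text{pre}_n(\alpha_{i+1}))$ is prefix-related with respect to $(1,n)$, so the whole problem collapses to a statement about the co-necklace prefixes and the $x=1$ run at their right ends.

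To prove that statement I would argue by contradiction exactly as in Theorem~\ref{thm:neckRev}. Write $\text{pre}_n(\alpha_i) = a_1\cdots a_n$, $\text{pre}_n(\alpha_{i+1}) = b_1\cdots b_n$, let $j$ be the smallest index with $a_{n-j}\neq 1$, and suppose $a_1\cdots a_{n-j-1}\neq b_1\cdots b_{n-j-1}$. There is a smallest $s < n-j$ with $a_s\neq b_s$; I would need to first establish the analogue of Lemma~\ref{lemma:one}, namely that consecutive elements of $\text{lex}(\mathbf{C}(n))$ have their length-$n$ prefixes appearing in lex order (this follows from the rotation structure defining $\mathbf{C}(n)$: the position of the last $1$ determines where the extended co-necklace gets rotated, and one checks the induced order on prefixes is lex — essentially the complement/reverse of the Lemma~\ref{lemma:one} computation), so $a_s < b_s$. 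Then $\gamma = a_1\cdots a_s 1^{n-s}$ has a rotation that is a co-necklace (because $1^{n-s}$ sits as the maximal run of $1$'s at the right end, mirroring the $0^{n-s}$ argument), its extended form lies between $\alpha_i$ and $\alpha_{i+1}$ in $\text{lex}(\mathbf{C}(n))$, contradicting consecutiveness. Hence $\text{pre}_{n-j-1}(\alpha_i) = \text{pre}_{n-j-1}(\alpha_{i+1})$ and the pair is prefix-related with respect to $(1,n)$, completing Condition~3 and the proof.

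The main obstacle I anticipate is the preliminary "$\mathbf{C}(n)$-analogue of Lemma~\ref{lemma:one}" — pinning down exactly how the lex order on $\mathbf{C}(n)$ restricts to an order on the length-$n$ prefixes, and verifying that the witness $\gamma = a_1\cdots a_s 1^{n-s}$ really does yield an element of $\mathbf{C}(n)$ strictly between $\alpha_i$ and $\alpha_{i+1}$. The co-necklace condition "$\gamma\overline\gamma$ is a necklace" is more delicate than the plain necklace condition used in Theorem~\ref{thm:neckRev}, so care is needed to confirm that forcing a trailing run of $1$'s after index $s$ keeps the string a co-necklace (equivalently, that $1^{n-s}0^{n-s}$-type patterns don't break the necklace property of the doubled string). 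Everything else is a routine transcription of the already-proven necklace arguments through the complement map supplied by Lemma~\ref{lemma:two}.
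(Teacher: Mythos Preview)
Your proposal is correct and follows the paper's route almost exactly: apply Corollary~\ref{corollaryOne} with $x=0$, reduce condition~3 via Lemma~\ref{lemma:two} to prefix-relatedness of the length-$n$ prefixes with respect to $(1,n)$, then run the Theorem~\ref{thm:neckRev}-style contradiction with the witness $\gamma = a_1\cdots a_s 1^{n-s}$. Two simplifications relative to what you sketch are worth noting.

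First, the merge of the last two cycles is unnecessary here. In Theorem~\ref{thm:neckRev} the merge was forced because $pr(0^n)$ has length $1<n$, but $\alpha_m = 1^n0^n$ is aperiodic, so $|\alpha_m'| = |pr(\alpha_m)| = 2n \ge n$ and condition~1 of Corollary~\ref{corollaryOne} holds directly. The paper simply takes $\alpha_i' = pr(\alpha_i)$ for all $1\le i\le m$ with no merging.

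Second, the ``$\mathbf{C}(n)$-analogue of Lemma~\ref{lemma:one}'' you flag as the main obstacle comes for free. The proof of Lemma~\ref{lemma:two} already uses that every $\alpha\in\mathbf{C}(n)$ satisfies $\alpha = \text{pre}_n(\alpha)\,\overline{\text{pre}_n(\alpha)}$; hence the map $\alpha\mapsto\text{pre}_n(\alpha)$ is an injection and the lex order on $\mathbf{C}(n)$ is exactly the lex order on the length-$n$ prefixes, with no separate argument needed. This same structural fact also dissolves your worry about the witness: once you show $0^{n-s}a_1\cdots a_s$ is a co-necklace, its associated element $\gamma\overline{\gamma}$ of $\mathbf{C}(n)$ automatically sits strictly between $\alpha_i$ and $\alpha_{i+1}$ because $\gamma$ sits strictly between their prefixes. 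The paper's proof asserts the co-necklace property of $0^{n-s}a_1\cdots a_s$ in one line (the leading block $0^{n-s}$ is a longest run of zeros), which is exactly the delicacy you anticipated but handled tersely.
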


\vspace{-0.15in}
\end{boxed2}
\noindent
\begin{proof}
Observe that $\alpha_{m} = 1^n0^n$, so the last symbol in the sequence $\mathcal{U}$ is $x=0$. Let $\bS_i = \factor_n(\alpha_{i})$ and $\alpha_i'=pr(\alpha_i)$ for $1 \leq i\leq m$. Then $\bS_1,\bS_2,\ldots, \bS_{m}$ is a UC-partition of $\bS$ with universal cycles $\alpha_1',\alpha_2',\ldots, \alpha_m'$ where $x=0$. To prove that $\mathcal{U}$
is a universal cycle for $\bS$  we show that the three conditions of Corollary~\ref{corollaryOne} hold.
\begin{enumerate}
\item Clearly $|\alpha_m'|\geq n$.
\item $\alpha_m'$ has suffix $x^n$, which must be maximal since all of the universal cycles are disjoint.
\item We must show that consecutive strings in $\text{ext}_n(\alpha_1'),\text{ext}_n(\alpha_2'),\ldots, \text{ext}_n(\alpha_{m}')$ are prefix-related with respect to $(x,n)$. Notice that $\text{pre}_n(\text{ext}_n(\alpha_i')) = \text{pre}_n(\alpha_i)$ for $1\leq i \leq m$. By Lemma~\ref{lemma:two} we only need show that $(\text{pre}_n(\alpha_i),\text{pre}_n(\alpha_{i+1}))$ are prefix-related with respect to $(1-x,n)$ for $1\leq i < m$. The proof is by contradiction. Let $\text{pre}_n(\alpha_i) = a_1a_2\cdots a_n$ and $\text{pre}_n(\alpha_{i+1}) = b_1b_2\cdots b_n$ for some $1\leq i < m$. Let $j$ be the smallest index of $\text{pre}_n(\alpha_i)$ such that $a_{n-j}\neq 1-x$. Suppose $a_1a_2 \cdots a_{n-j-1} \neq b_1b_2 \cdots b_{n-j-1}$. Then there exists some smallest $s<n-j$ such that $a_{s}\neq b_{s}$. Since $\text{pre}_n(\alpha_i)$ comes before $\text{pre}_n(\alpha_{i+1})$ in lex order, then $a_{s}<b_{s}$. Let $\gamma=a_1a_2\cdots a_{s}1^{n-s}$. Clearly $\gamma$ is between $\text{pre}_n(\alpha_i)$ and $\text{pre}_n(\alpha_{i+1})$ in lex order, and $0^{n-s}a_1a_2\cdots a_{s}$ is a co-necklace since $0^{n-s}$ is the largest run of ones within the string, a contradiction. So $a_1a_2\cdots a_{n-j-1} = b_1b_2\cdots b_{n-j-1}$, which implies $\text{pre}_{n - j -1}(\text{pre}_n(\alpha_i)) = \text{pre}_{n - j -1}(\text{pre}_n(\alpha_{i+1}))$. Thus the pair of strings $(\text{pre}_n(\alpha_i),\text{pre}_n(\alpha_{i+1}))$ are prefix-related with respect to $(1-x,n)$, which implies $(\alpha_i,\alpha_{i+1})$ are prefix-related with respect to $(x,n)$.
\end{enumerate}

\vspace{-0.2in}
\end{proof}

When $m=|\mathbf{C}(n)|$, the above theorem yields the following corollary which describes a new de Bruijn sequence construction.

\begin{corollary}\label{corollary:coNecklex}
For $n\geq 2$, $\text{UC}(\text{lex}(\mathbf{C}(n)))$ is a de Bruijn sequence for $\Sigma_2^n$.
\end{corollary}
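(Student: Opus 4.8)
The plan is to derive the corollary from Theorem~\ref{thm:coNecklex} by taking $m=|\mathbf{C}(n)|$, so that $\alpha_1,\alpha_2,\ldots,\alpha_m$ is the \emph{entire} listing $\text{lex}(\mathbf{C}(n))$ and $\mathcal{U}=\text{UC}(\text{lex}(\mathbf{C}(n)))$ is a universal cycle for $\bS=\bigcup_{\alpha\in\mathbf{C}(n)}\factor_n(\alpha)$. It then suffices to prove $\bS=\Sigma_2^n$, since a universal cycle for $\Sigma_2^n$ is by definition a de Bruijn sequence for $\Sigma_2^n$.

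To identify $\bS$ I would use two facts: that $\{\factor_n(\gamma):\gamma\in\mathbf{coN}(n)\}$ is a partition of $\Sigma_2^n$ (recalled in this subsection), and that $\factor_n$ is invariant under cyclic rotation. By definition every $\alpha\in\mathbf{C}(n)$ is a cyclic rotation of some $\gamma\in\mathbf{coN}(n)$, hence $\factor_n(\alpha)=\factor_n(\gamma)$ is a block of this partition, so $\bS$ is a union of such blocks. For the reverse inclusion I would exhibit, for each $\gamma\in\mathbf{coN}(n)$, a rotation of $\gamma$ lying in $\mathbf{C}(n)$: writing $\gamma=\beta\overline{\beta}$ with $\beta\neq\overline{\beta}$, the string $\gamma$ is a non-constant necklace, hence has the form $\gamma=0^p\delta$ with $p\geq 1$, where $\delta$ begins with $1$ and also ends with $1$ (a non-constant necklace ends in $1$). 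Setting $\alpha=\delta 0^p$, the largest index $i$ with $a_i\neq 0$ is $i=2n-p$, and $a_{i+1}\cdots a_{2n}a_1\cdots a_i=0^p\delta=\gamma\in\mathbf{coN}(n)$, so $\alpha\in\mathbf{C}(n)$ and $\factor_n(\alpha)=\factor_n(\gamma)$. Thus every block of the partition occurs among $\{\factor_n(\alpha):\alpha\in\mathbf{C}(n)\}$, so $\bS=\Sigma_2^n$ and the corollary follows; moreover $n\geq 2$ (hence $|\mathbf{C}(n)|\geq 1$) ensures the hypothesis $m\geq 1$ of the theorem.

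The only step needing a moment of care is the structural claim used in the reverse inclusion: that each extended co-necklace is a non-constant necklace, and therefore of the form $0^p\delta$ with $\delta$ starting and ending in $1$, so that the displayed rotation is exactly the representative singled out by the definition of $\mathbf{C}(n)$. This is the co-necklace analogue of the correspondence between $\mathbf{R}_k(n)$ and $\mathbf{Neck}_k(n)$ underlying Corollary~\ref{corollary:neckRev}, and presents no real difficulty. One could equivalently finish by checking that $\alpha\mapsto\factor_n(\alpha)$ is injective on $\mathbf{C}(n)$ (again because each rotation class of length $2n$ contains a unique necklace), giving $|\bS|=|\Sigma_2^n|=2^n$ directly and hence equality.
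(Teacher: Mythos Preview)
Your proposal is correct and follows the same route as the paper: apply Theorem~\ref{thm:coNecklex} with $m=|\mathbf{C}(n)|$ and then identify $\bigcup_{\alpha\in\mathbf{C}(n)}\factor_n(\alpha)$ with $\Sigma_2^n$. The paper dispatches the corollary in a single sentence, taking the latter identification for granted (relying on the stated fact that $\{\factor_n(\gamma):\gamma\in\mathbf{coN}(n)\}$ partitions $\Sigma_2^n$ together with the evident rotation correspondence between $\mathbf{C}(n)$ and $\mathbf{coN}(n)$); you spell out explicitly why every extended co-necklace has a rotation in $\mathbf{C}(n)$, which is a reasonable level of detail to include.
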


\begin{exam} \small Consider the set $\mathbf{C}(6)$ listed in lex order:

\vspace{-0.25in}

\begin{align}
&100111011000, 101011010100, 101111010000, 110011001100, 110111001000, 111111000000.\nonumber
\end{align}

\vspace{-0.10in}

\noindent
By Corollary~\ref{corollary:coNecklex}, the following is a de Bruijn sequence:

\vspace{-0.15in}

\[100111011000\cdot 101011010100\cdot 101111010000\cdot 1100\cdot 110111001000\cdot 111111000000.\]

\vspace{-0.05in}

\noindent
By Theorem~\ref{thm:coNecklex}, $1100 \cdot 110111001000\cdot 111111000000$,
is a universal cycle for $\factor_6(1100) \cup \factor_6(110111001000)\cup \factor_6(111111000000)$.
\end{exam}

\section{Conclusions and open problems}
In this paper we presented conditions for when a listing of universal cycles can be concatenated together to produce longer universal cycles and ultimately, de Bruijn sequences.  By applying the conditions, we generalized two previously known de Bruijn sequence concatenation-based constructions and discovered three new ones.  De Bruijn sequences from each of these five constructions for $n=6$ and $k=2$ are given below.

\small
\begin{center}
\begin{tabular}{l | c}
{\bf Construction} &  {\bf de Bruijn Sequence for $n=6,k=2$} \\   \hline
$\text{UC}(\text{lex}(\mathbf{Neck}_2(6)))$  &  0000001000011000101000111001001011001101001111010101110110111111  \\
$\text{UC}(\text{colex}(\mathbf{Neck}_2(6)))$  &   0000001001000101010011010000110010110110001110101110011110111111 \\

$\text{UC}(\text{revlex}(\mathbf{R}_2(6)))$  &  1111110111100111000110110100110000101110101100101010001001000000 \\
$\text{UC}(\text{revcolex}(\mathbf{coN}(6)))$  &  0000001111110001001110110011000010111101001010110101000110111001 \\
$\text{UC}(\text{lex}(\mathbf{C}(6)))$  &  1001110110001010110101001011110100001100110111001000111111000000  \\
\end{tabular}
\end{center}
\normalsize

\noindent
 We conclude by posing some open problems.
\begin{enumerate}
\item Can $k$-ary necklaces be generated in $O(n)$-amortized time per string  in colex order?   If so, then the de Bruijn sequence  $\text{UC}(\text{colex}(\mathbf{Neck}_k(n)))$ can be generated in $O(1)$-amortized time per symbol.  There is an efficient algorithm in the binary case~\cite{SAWADA201725}.
\item Can the de Bruijn sequences  $\text{UC}(\text{revlex}(\mathbf{R}_k(n)))$  and $\text{UC}(\text{lex}(\mathbf{C}(n)))$ be generated in $O(1)$-amortized time per symbol?
\end{enumerate}


\small
\bibliographystyle{abbrv}
\bibliography{refs}

\end{document}